\newtheoremstyle{thm}
  {9pt}{9pt}{\itshape}{}{\bfseries}{}{.5em}{}
\theoremstyle{thm}
\newtheorem{thm}{Theorem}[section]
\newtheorem{cor}[thm]{Corollary}
\newtheorem{lem}[thm]{Lemma}
\newtheorem{lemma}[thm]{Lemma}
\newtheoremstyle{defin}
  {9pt}{9pt}{}{}{\bfseries}{}{.5em}{}
\theoremstyle{defin}
\newtheoremstyle{exm}
  {9pt}{9pt}{}{}{\scshape}{}{.5em}{}
\theoremstyle{exm}
\newtheoremstyle{proof}
  {}{}{}{}{\itshape}{:}{.5em}{}
\theoremstyle{proof}
\def\co{\mathcal O}
\def\cR{\mathcal R}
\def\cT{\mathcal T}
\def\cT{\mathbf{T}}
\def\<{\langle}
\def\>{\rangle}
\def\y{ {\text {\rm y}  } }
\def\0{{\mathbf 0}}
\def\.{\hskip.06cm}
\def\ts{\hskip.03cm}
\def\T{{{\mathbf{T}}}}
\def\R{{{\mathbb{R}}}}
\def\W{{{\mathbf{W}}}}
\def\co-NP{\textup{co-NP}}
\def\NP{\textup{NP}}
\def\P{\textup{P}}
\def\SP{\textup{\#P}}
\def\G{\Gamma}
\def\cT{\mathcal{T}}
\def\cR{\mathcal{R}}
\def\Z{\mathbb{Z}}
\def\N{\mathbb{N}}
\def\p{\pi}
\newcommand{\abs}[1]{\left\lvert#1\right\rvert}
\newcommand{\bs}{\backslash}
\newcommand{\latin}[1]{\textsl{#1}}
\newcommand{\fig}[1]{Figure~\ref{fig:#1}}
\newcommand{\problem}[1]{\textsc{#1}}
\newcommand{\problemdef}[3]{
\medskip
\begin{tabular}{ll}
\multicolumn{2}{l}{\problem{#1}}\\
\textbf{Instance:} & #2 \\
\textbf{Decide:} & #3
\end{tabular}\medskip}
\newcommand{\tf}[1]{\mathsf{#1}} 
\newcommand{\tsf}[1]{\mathbf{#1}} 
\newcommand{\tmf}[1]{\mathcal{#1}} 
\newcommand{\B}{\tsf B}
\newcommand{\J}{\tf J}
\newcommand{\K}{\tf K}
\renewcommand{\L}{\tf L}
\renewcommand{\H}{\tf H}
\renewcommand{\R}{\tf R}
\renewcommand{\P}{\tf P}
\newcommand{\M}{\tmf M}
\newcommand{\MM}{\tsf M_\tmf M}
\newcommand{\x}{\mathbf x}
\newcommand{\tilen}[7]{
\begin{scope}[xshift=#5cm,yshift=#6cm]
\draw (0,0) rectangle (1,1);
\node[left] at (0,0.5) {$#1$};
\node[above] at (0.5,1) {$#2$};
\node[below] at (0.5,0) {$#3$};
\node[right] at (1,0.5) {$#4$};
\node at (0.5,0.5) {$#7$};
\end{scope}
}
\newcommand{\tiletn}[7]{
\tilen{#1}{#2}{#3}{#4}{#5}{#6}{\tf{#7}}
}
\newcommand{\tile}[6]{
\tilen{#1}{#2}{#3}{#4}{#5}{#6}{}
}
\title{Rectangular tileability and complementary tileability are undecidable}
\author[Jed~Yang]{ \ Jed~Yang$^\star$}
\thanks{\thinspace ${\hspace{-.45ex}}^\star$Department of Mathematics, UCLA, Los Angeles, CA 90095, USA; \.
\texttt{jedyang@ucla.edu}}
\date{}
\begin{document}

\begin{abstract}
Does a given a set of polyominoes tile some rectangle?
We show that this problem is undecidable.
In a different direction, we also consider tiling a cofinite subset of the plane.
The tileability is undecidable for many variants of this problem.
However, we present an algorithm for testing whether the complement of a finite region is tileable by a set of rectangles.
\end{abstract}
\maketitle

\section{Introduction}

Tileability on the plane has been a subject of much study~\cite{GS}.
A lot of focus was in tilings on the square grid by polyominoes~\cite{Gol-book},
including 
establishing \NP-completeness~\cite{Lew,MR,PY-rect}
and
finding efficient algorithms when possible~\cite{KK,Rem}.
Aperiodicity in tilings of the entire plane has also been well-studied~\cite{Moz,Pen},
with connections to ergodic theory~\cite{Rad,CR} and quasicrystals~\cite{DS}.
Recently, a single (disconnected) tile that exhibits aperiodic behavior was found~\cite{ST},
partially settling a famous open problem.

Can the plane be tiled using translated copies of a given set of polyomino tiles?
Berger showed that this decision problem is undecidable~\cite{Ber},
meaning that there is no general algorithm that can always answer this question from the input.
This implies that there exists \emph{aperiodic tilesets},
\latin{i.e.}, tiles that can \emph{only} tile the plane without translational symmetry.
Indeed, Berger provided an aperiodic tileset of $20426$ tiles,
and Robinson reduces this number to $6$ if rotations and reflections are allowed in addition to translations~\cite{Rob}.
This disproves a conjecture of Wang (see Subsection~\ref{ss:wang}).

To have aperiodicity and undecidability, clearly the complexity of tiles and tilings must increase without bound.
The following result shows that one can encode this complexity in a single tile alone.%
\footnote{We require that this tile be used precisely once.
To that end, we consider it as an input and tile its complement by a fixed tileset.}

\begin{thm}[Complementary Tileability]
There is a tileset $\T$ such that it is undecidable whether
the complement of a finite input region $\G$ is tileable by~$\T$.
\end{thm}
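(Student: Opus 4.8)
\begin{skt}
The plan is to reduce from the non-halting problem for a fixed universal Turing machine $\mathcal{U}$, which is undecidable: one cannot decide, from an input word $x$, whether the run of $\mathcal{U}$ on $x$ fails to halt. I will build a single polyomino tileset $\T$ together with a computable map $x \mapsto \G_x$ from words to finite connected regions such that $\Z^2 \sm \G_x$ admits a tiling by copies of tiles of $\T$ if and only if $\mathcal{U}$ does not halt on $x$; since $\T$ is independent of $x$, this makes complementary tileability undecidable. I will describe $\T$ in the convenient language of coloured edges and local matching rules, passing at the end to genuine polyominoes by the classical device of replacing each colour with a complementary pattern of boundary bumps and dents (see \cite{Gol-book}), which inflates every tile by a bounded factor; translations suffice, and to also allow rotations and reflections one adds the usual orientation-locking bumps.

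First, the tileset $\T$. Picture the plane with ``time'' increasing upward. The core of $\T$ is a family of \emph{tape tiles} in the spirit of Berger and Robinson: each carries a tape symbol of $\mathcal{U}$, and the cell under the head carries in addition a state; the bottom and top edges of a tile transmit the tape contents across one time step, its left and right edges transmit the head together with its state, and the admissible vertical adjacencies are precisely the transition rules of $\mathcal{U}$. Crucially, \emph{no} tile of $\T$ records a halting state on its bottom edge, so a halted configuration cannot be continued one step upward. To pin the initial row I introduce a distinguished \emph{floor} colour: \emph{background tiles} carry a single fixed colour on all four edges, chosen so that they become nontrivial polyominoes rather than unit squares; \emph{floor tiles} carry that colour below and the floor colour above; and the only tiles admitting the floor colour on their bottom edge form a matching family of \emph{initial tape tiles}, which behave like ordinary tape tiles except that their bottom edge records the floor colour together with a symbol and possibly a state. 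A routine check then shows that in any $\T$-tiling every column consists of background tiles, then one floor tile, then one initial tape tile, then ordinary tape tiles, and that the floor occurs at a single level common to all columns (since the sides of background tiles carry only the background colour, which does not appear on tape tiles); hence above the floor one reads off a bona fide space-time diagram of $\mathcal{U}$ from some initial configuration, and below it only background.

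The region $\G_x$ is a thin horizontal bar of unit cells lying immediately below row $0$ at columns $0, 1, \dots, |x|-1$, whose top edge carries tabs that spell out, column by column, the floor colour together with the initial configuration of $\mathcal{U}$ on $x$ --- the symbol $x_i$ in column $i$, and the head in the start state over column $0$ --- while its remaining edges carry the background colour. Since a background or floor tile cannot sit against a tab that encodes the head or a non-blank symbol, and since the floor level is common to all columns, the tabs of $\G_x$ force row $0$ of the whole plane to be exactly the initial tape row of $\mathcal{U}$ on $x$. By determinism of $\mathcal{U}$ and the vertical matching rules, rows $1, 2, 3, \dots$ are then forced to be the successive configurations of the computation; if it halts, say at time $t$, the forced tile at the head's position in row $t$ bears a halting state below and admits no tile above it, so $\Z^2 \sm \G_x$ is not tileable, whereas if the computation runs forever the upper half-plane is filled consistently by the space-time diagram and the rest of $\Z^2 \sm \G_x$ by background tiles, so a tiling exists. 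This is the asserted equivalence, and undecidability follows.

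I expect the real work to be \emph{rigidity} --- showing that $\Z^2 \sm \G_x$ has no tiling other than the intended one: no second floor level capping or shifting the computation, no background tile invading an active column, no head created out of nowhere, no column drifting out of step with its neighbours, and, after the bump encoding, no pairing of bumps and dents beyond the ones we built in. This is precisely the bookkeeping carried out in the constructions of Berger \cite{Ber} and Robinson \cite{Rob}; $\T$ has been arranged --- through the floor colour, the split between initial and ordinary tape tiles, and the omission of halting tiles --- so that their arguments transfer with only cosmetic changes, while the reduction from coloured-edge tiles to polyominoes is routine.
\end{skt}
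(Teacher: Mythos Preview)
Your approach is correct and essentially identical to the paper's: both fix a universal Turing machine, encode the input word in the boundary colours of the finite region, use the standard Wang-tile emulation so that the complement is tileable exactly when the computation never halts, and then pass to polyominoes via bump-and-dent encoding. The paper runs time downward and handles the ``idle'' half-plane and the blank extension of the tape with two dedicated tiles $\tf I$ and $\tf U$ (the side colour $T$ carried by $\G_\x$ forces an infinite row of $\tf I$'s, each emitting the blank symbol), whereas you run time upward with a floor/background layer; the one place your sketch is loose is precisely this blank-forcing step. As written, your floor tile carries a bare ``floor colour'' on top while your initial tape tiles carry compound colours ``floor together with a symbol and possibly a state'' on the bottom, so nothing yet prevents columns outside $\G_x$ from starting with arbitrary symbols or a second head --- you should say explicitly that the floor tile's top colour is the specific compound colour for ``blank, no head'' (or, equivalently, adopt the paper's device of a special side colour on $\G_x$ that propagates a blank-only initializer tile along the whole row).
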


Instead of tiling the entire plane or a cofinite subset, we also consider tiling finite regions.
If a rectangle is tileable, then of course the plane is tileable.
Thus the following is a variation of the result above.

\begin{thm}[Rectangular Tileability]\label{t:rect}
It is undecidable whether a given tileset $\T$ can tile \emph{some} rectangle.
\end{thm}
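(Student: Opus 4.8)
The plan is to reduce from the halting problem for Turing machines on the empty input. Observe first that ``$\T$ tiles some rectangle'' is semidecidable: one may enumerate all rectangles together with all candidate placements and halt when a valid tiling of one of them is found. So a computable map from Turing machines to tilesets taking ``$M$ halts'' to ``$\T$ tiles some rectangle'' already forces the latter to be undecidable. Thus, given $M$, I would build a polyomino tileset $\T_M$ such that $\T_M$ tiles some rectangle if and only if $M$ halts on the empty tape, the rectangle being (essentially) the space--time diagram of the halting computation.

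To build $\T_M$ I would use the standard simulation of edge-coloured (Wang) tiles on the square grid by polyominoes: each colour is encoded by a ``bump/notch'' pattern so that neighbouring pieces must agree on the shared colour, and a piece can lie on the boundary of a tiled region only along an edge that is left \emph{flat}. Designating certain colours as boundary colours (realized by flat edges), it suffices to produce Wang tiles such that a rectangle has a tiling whose whole boundary is boundary-coloured exactly when $M$ halts. The rules: the bottom (``floor'') colour appears only on tiles encoding the cells of the initial configuration of $M$, with a unique head tile pinned at the lower-left corner by the left-wall colour and blank-tape tiles to its right; consecutive rows are related by the usual transition tiles that simulate one step of $M$; the left- and right-``wall'' colours appear only on tiles carrying blank tape, and the tile for a rightward move off the right wall is simply omitted, so a width-$w$ rectangle can host a computation only as long as it uses space at most $w$; the top (``ceiling'') colour appears only on a ``halt cap'' whose downward-matching colour is produced exactly by a halting configuration. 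Every remaining tile is given non-flat edges on all four sides, so it can never touch the boundary of a rectangle.

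Both implications are then short. If $M$ halts at time $t$ using space $s$, then for any width $w$ a little larger than $s$ and height $h=t+2$ one tiles the $w\times h$ rectangle explicitly: the initial configuration on the bottom row, the successive configurations (padded on the right by blank-tape tiles up to the wall) on the next $t$ rows, and the halt cap on the top row, with the walls and corners forced throughout. Conversely, in any tiling of any rectangle by $\T_M$, flatness of the four sides forces the wall and corner tiles; the floor colour then forces the bottom row to be the initial configuration; the transition tiles force each next row up to be the successor configuration; and the ceiling colour on the topmost row forces the configuration just beneath it to be halting. Hence $M$ halts.

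The crux is making the converse airtight, i.e.\ excluding spurious tilings. One must verify that no degenerate rectangle slips through --- no tile is flat on all four sides, and the floor, ceiling and wall tiles share no flat edge beyond the intended ones, so $1\times n$ and $n\times 1$ rectangles (and $1\times 1$) are impossible; that blank-tape and wall tiles cannot intrude into the cells swept by the head; and that the head, transition, and halt-cap gadgets are deterministic enough that, scanning inward from the boundary, every interior tile is uniquely forced, so that the forced content is precisely the computation tableau of $M$. Designing this web of local gadgets --- the head-at-the-wall case, the exact moment of halting, and the floor and ceiling interfaces in particular --- to be simultaneously consistent, and proving the resulting rigidity, is where essentially all the effort goes. (One could instead invoke a known undecidable ``origin-constrained rectangular tiling'' problem for Wang tiles and perform only the Wang-to-polyomino step; this reorganizes the bookkeeping rather than removing it.)
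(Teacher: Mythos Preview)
Your approach is correct and takes a genuinely different route from the paper's. The paper reduces from the Post Correspondence Problem rather than the halting problem: given a PCP instance $(j_t,k_t)_{t\in[n]}$, it builds generalized Wang tiles so that a white-bordered rectangle is tileable iff the instance has a solution, with a top row of $\J$-tiles spelling $j_{e_1}\cdots j_{e_d}$, a bottom row of $\K$-tiles spelling $k_{e_1}\cdots k_{e_d}$, and intermediate ``transmitter'' squares that shuttle the index tags left or right one step at a time so the two rows are forced to use the same index sequence; it then passes to polyominoes via the standard bump/notch encoding, and handles the possibility of border tiles in the interior by the minimal-rectangle trick. Your Turing-machine reduction is conceptually more direct (the rectangle \emph{is} the space--time tableau), and indeed the paper develops exactly this machinery---a Turing emulation lemma plus $\L/\R/\H$ border and halt tiles---for its later \problem{Complementary Tileability} and \problem{Augmentability} theorems, so your argument could be assembled almost verbatim from those ingredients. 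What the PCP route buys for \emph{this} theorem is that the interior of the rectangle is mere symbol transmission rather than a computation, so the rigidity verification you flag as the crux (head uniqueness in every row, head--wall interaction, the halt-cap interface) collapses to a one-line check that transmitter tiles preserve the tag sequence; it also feeds more directly into the paper's subsequent bounded-tileset refinement via the known undecidability of PCP with $7$ pairs.
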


This should be contrasted with tiling a given rectangle by a fixed tileset:

\begin{thm}[\cite{LMP}]\label{t:curious}
Tileability of an $[n\times m]$ rectangle by a fixed tileset can be determined in time $O(\log n+\log m)$.
\end{thm}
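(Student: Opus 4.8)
The plan is to prove that, for any fixed tileset $\T$, the set $T\subseteq\N^2$ of pairs $(n,m)$ for which the $[n\times m]$ rectangle is tileable by $\T$ is \emph{eventually doubly periodic}, with all relevant constants depending only on $\T$; once this structure is in hand the algorithm is just modular reduction followed by a constant-size lookup. First I would record the monoid structure coming from gluing tilings: juxtaposing two tilings side by side shows that each row section $T_m:=\{n:(n,m)\in T\}$ is a submonoid of $(\N,+)$, while stacking tilings shows that each column section $T^{(n)}:=\{m:(n,m)\in T\}$ is a submonoid; the same argument shows that the set $W$ of widths that tile some positive-height rectangle is a submonoid. The crucial step is a \emph{common-height lemma}: since every submonoid of $\N$ is finitely generated, $W=\langle g_1,\dots,g_r\rangle$; each $g_i$ tiles some height $h_i$, and stacking $H/h_i$ copies, where $H:=\operatorname{lcm}(h_1,\dots,h_r)$, shows that each $g_i$ tiles height $H$; juxtaposing then gives $W\subseteq T_H$, hence $T_H=W$. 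Thus one fixed height $H$, depending only on $\T$, realizes every width that is tileable at any height at all.

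Next I would use $T_H=W$ to obtain monotonicity along residue classes modulo $H$. Fix $\rho\in\{0,\dots,H-1\}$ and look at the heights $\rho,\rho+H,\rho+2H,\dots$. If $n\in T_{\rho+kH}$ then in particular $n\in W=T_H$, so an $[n\times H]$ tiling exists and can be stacked onto the given $[n\times(\rho+kH)]$ tiling; hence $T_{\rho+kH}\subseteq T_{\rho+(k+1)H}$. So $(T_{\rho+kH})_{k\ge0}$ is an increasing chain of submonoids of $\N$. Their gcd's are non-increasing in $k$, hence eventually constant, say equal to $d_\rho$; once the gcd has stabilized, write $T_{\rho+kH}=d_\rho\N\setminus F_k$ with $F_k$ finite — the $F_k$ then form a decreasing chain of finite sets, so they too stabilize. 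Consequently there are a constant $K_\rho$ and a submonoid $S_\rho\subseteq\N$, both depending only on $\T$, with $T_{\rho+kH}=S_\rho$ for all $k\ge K_\rho$; in other words, for $m$ past the fixed threshold $N_0:=\max_\rho(\rho+K_\rho H)$ the section $T_m$ depends only on $m\bmod H$.

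The algorithm follows immediately. Given $(n,m)$ in binary: if $m\ge N_0$, compute $\rho:=m\bmod H$ in time $O(\log m)$ and test whether $n$ lies in the fixed submonoid $S_\rho$; if $m<N_0$ there are only finitely many cases, and for each the submonoid $T_m$ is a fixed object to be tested against. Testing membership of a binary integer $n$ in a fixed submonoid of $\N$ — a set of the form $d\N\setminus F$ above a conductor $c$, with $F$ a hard-coded finite exception set — costs $O(\log n)$: compare $n$ to the constants, then either test $d\mid n$ or look $n$ up among the exceptions. The total is $O(\log n+\log m)$. I expect the main obstacle to be precisely the uniformity in the height: a direct transfer-matrix analysis makes each $T_m$ ultimately periodic, but with period and threshold that a priori grow with $m$, so that no single table can work; the common-height lemma together with the monotone-stabilization argument is exactly what collapses this to finitely many cases. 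The remaining ingredients are routine: submonoids of $\N$ are finitely generated and cofinite in $d\N$ above an explicit conductor; and if one wants $H$, $N_0$, and the $S_\rho$ to be computable from $\T$ rather than merely to exist, a pumping argument on the column transfer matrix of a height-$m$ strip bounds the period and threshold of $T_m$ in terms of its frontier-state count.
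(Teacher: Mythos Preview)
The paper does not supply its own proof of this theorem; it is quoted from~\cite{LMP}. The only argument the paper indicates, in Subsection~\ref{ss:curious}, is the combination of the de~Bruijn--Klarner finite-basis theorem~\cite{DK} (any tileset tiles the same rectangles as some finite set of rectangular tiles) with the rectangles-only case of~\cite{LMP}. Your route is genuinely different and more self-contained: you never reduce to rectangular tiles, but instead extract eventual double periodicity directly from the additive-monoid structure of the sections $T_m$, the common-height identity $T_H=W$, and a Noetherian stabilization of the increasing chains $(T_{\rho+kH})_k$. This avoids~\cite{DK} entirely, which is pleasant since that result is purely existential. The only small cleanup is the degenerate case $\rho=0$, $k=0$: height~$0$ is tileable for every width, so your implication $n\in T_{\rho+kH}\Rightarrow n\in W$ fails there; simply start that chain at $k=1$.

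One correction to your closing remark. You suggest that a transfer-matrix pumping argument would make $H$, $N_0$, and the $S_\rho$ computable from~$\T$. It cannot: pumping bounds the period and threshold of each individual $T_m$, but if the global constants $H$ and $N_0$ (together with the finitely many $T_m$ for $m<N_0$) were computable from~$\T$, one could decide whether some positive-area rectangle is $\T$-tileable, contradicting Theorem~\ref{t:rect}. The paper makes exactly this point in Subsection~\ref{ss:curious}: for each fixed~$\T$ the $O(\log n+\log m)$ algorithm exists, but it cannot be produced algorithmically from~$\T$. Your earlier, weaker stance --- that the constants depend only on~$\T$ and merely exist --- is therefore the correct one, and is all the theorem requires.
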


We discuss this connection and some curious consequences of Theorem~\ref{t:rect} in Subsection~\ref{ss:curious}.
It is worth noting that if we are given a finite region to tile as opposed to the entire plane,
the problem is decidable simply with exhaustive search (see Subsection~\ref{ss:complexity} for results in this finite setting).
However, although the region to be tiled in \problem{Rectangular Tileability} is finite,
the problem is (potentially) undecidable as the finite region is unspecified.
That is, we are tiling a finite object from an infinite collection.
Indeed, Theorem~\ref{t:rect} shows that \problem{Rectangular Tileability} is undecidable.
We prove this in Section~\ref{s:rect},
where we moreover show that the problem remains undecidable when the size $\abs{\T}$ of the tileset is fixed.
Also, as a corollary,
we see that tileability of a unit square by finitely many similar copies of tiles
(where rotations, reflections, and dilations are allowed in addition to translations) is also undecidable.

In Section~\ref{s:aug}, we prove undecidability for \problem{Augmentability},
where we are to \emph{augment} a finite simply connected region~$\G$ by tiles so that the union is tileable.
Any augmentable region $\G$ by the (horizontal and vertical) dominoes must necessarily be \emph{balanced},
\latin{i.e.}, has the same number of black and white squares when the plane is colored as a checkerboard.
Korn showed that this is not sufficient, and also proved that $\G$ is augmentable if it is \emph{row-convex},
\latin{i.e.}, each horizontal row forms a single contiguous region~\cite[\S11]{Korn}.
This should be compared with Theorem~\ref{t:alg} below.

Usually once a result is established for a decision problem with several inputs,
one may consider fixing some of these inputs and aim to obtain the same conclusion.
To that end, we fix the tileset and instead let the region vary as the input of \problem{Tileability}.
However, decidability makes sense only if the input is finite, yet the region is infinite.
As such, in Section~\ref{s:cofinite},
we consider some variations of tiling cofinite regions.
Though most of these problems are undecidable,
in the positive direction, we provide an algorithm for \problem{Tileability} of cofinite regions by arbitrary sets of rectangular tiles.
\begin{thm} \label{t:alg}
It is decidable whether the complement of a given finite region $\G$ is tileable by a given tileset~$\T$ consisting only of rectangles.
\end{thm}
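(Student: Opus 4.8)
The plan is to reduce the question to a finite computation by transfer matrices, exploiting that $\Z^2\setminus\G$ is translation-invariant — and, when tiled by rectangles, rigid — outside a bounded neighbourhood of~$\G$. After discarding the trivial case $\T=\nothing$, I would fix $W$ and $H$, the largest width and height of a tile of~$\T$, and choose a vertical strip $S=\{(x,y):|x|\le N\}$ with $N$ a suitable computable function of $\G$ and $W$, so that $\G\subseteq S$ and no tile can meet both of the $\G$-free half-planes $\{x<-N\}$ and $\{x>N\}$. Any $\T$-tiling of $\Z^2\setminus\G$ then decomposes into: the tiles meeting $\{x<-N\}$ (which tile that half-plane together with an \emph{overhang} lying in the leftmost columns of $S$), the tiles meeting $\{x>N\}$ (similarly, with an overhang on the right), and the remaining tiles, which lie inside $S$ and tile $(S\setminus\G)$ with the two overhangs removed. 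Encoding an overhang by its profile $\pi\colon\Z\to\{0,\dots,W-1\}$, where $\pi(y)$ counts the overhang cells in row~$y$, one gets: $\Z^2\setminus\G$ is $\T$-tileable iff there are profiles $\pi_L\in\Sigma_L$ and $\pi_R\in\Sigma_R$ — the sets of profiles realizable by a $\T$-tiling of the left, resp.\ right, $\G$-free half-plane — for which $(S\setminus\G)$ with those two overhang regions deleted can be tiled by $\T$-tiles contained in~$S$.

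The second, routine, half of the argument decides this last condition by a transfer matrix sweeping $S$ in the $y$-direction. The state at a horizontal line records the bounded amount of data ``which of the $2N+1$ cells on the line are covered by an as-yet-unfinished strip tile, and with how tall a remainder,'' together with the states of finite automata certifying $\pi_L\in\Sigma_L$ and $\pi_R\in\Sigma_R$; a transition legally adds one row of strip tiles and one value each of $\pi_L,\pi_R$, so that the cells of row~$y$ left uncovered by strip tiles are exactly a left prefix of length $\pi_L(y)$ together with a right suffix of length $\pi_R(y)$ (after removing $\G$). Since $\G$ meets only boundedly many rows, outside that range the transition relation is independent of~$y$, so a valid bi-infinite tiling exists iff the transition graph admits a cycle reachable from, and co-reachable to, another cycle through the finitely many $\G$-rows — an effectively checkable condition, \emph{provided} $\Sigma_L$ and $\Sigma_R$ are effectively sofic, i.e.\ recognized by finite automata reading profiles as two-sided infinite words.

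The heart of the proof, and the one place where it is essential that $\T$ consists of rectangles, is therefore the claim that $\Sigma_L$ (and symmetrically $\Sigma_R$) is an effectively computable sofic shift. For general polyomino tilesets this must fail, as it would make plane-tileability decidable against Berger's theorem; for rectangles I would prove it by a normalization/pumping argument showing that a half-plane $\T$-tiling realizing a prescribed boundary profile can be taken periodic in the $x$-direction once $x$ is sufficiently negative, with a computably bounded transient — so that realizability of a profile is verified by a bounded sweep over the transient followed by a periodic tail, hence by a finite automaton in the $y$-direction. I expect this normalization — bounding how far the non-periodic behaviour of a rectangle tiling of a half-plane (and then of the bounded-width strip) can persist away from the forced cells — to be the main obstacle; the rectangularity should enter precisely here, since it is what forces such tilings to ``heal'' to a periodic pattern. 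Everything else is bookkeeping with finite automata.
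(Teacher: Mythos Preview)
Your proposal is far more elaborate than the paper's argument, and its central step is left as a plan rather than a proof. The paper's proof rests on one short observation: if some finite region $B$ covering a rectangle $A\supset\G$ can be $\T$-tiled while avoiding $\G$, then $\Z^2\setminus\G$ is $\T$-tileable. The algorithm is therefore just a finite search for such a local tiling around $\G$; output \textsc{yes} if one is found, \textsc{no} otherwise. Rectangularity is used exactly once, in the (constructive) extension: along the top edge of $A$, the local tiling exposes a row of rectangles, some flush and some protruding; above each one, stack copies of that same rectangle to fill an infinite vertical half-strip. Repeat on the other three sides, and fill the four remaining corner quadrants with any single tile from $\T$. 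That is the entire proof---no transfer matrices, no sofic shifts, no profile analysis.

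Your framework could in principle be pushed through, but the part you identify as the ``heart''---that the set $\Sigma_L$ of realizable overhang profiles is an effectively computable sofic shift---is stated only as an expectation with a sketched ``normalization/pumping'' strategy, not proved. You are right that this is precisely where rectangularity must enter and that it fails for general polyominoes, but you have not actually supplied the argument, so the proposal has a genuine gap as it stands. More to the point, the paper avoids this entire layer of symbolic dynamics: the rectangularity of the tiles makes the local-to-global extension trivial and explicit, so one never needs to classify the possible interfaces at all.
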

In contrast, we show in Section~\ref{s:quadrant} that \problem{Tileability} of \emph{indented quadrants} by rectangles is undecidable.

\section{Basic definitions}\label{s:def}
We call a subset of $\Z^2$ a \emph{region}.
By identifying $\Z^2$ as a union of closed unit squares in $\mathbb{R}^2$ centered at the integer lattice points,
a region takes on a geometric \emph{shape} in the obvious manner.
We freely switch between viewpoints when convenient.%
\footnote{For example, \emph{finite} and \emph{disjoint} refer to regions as subsets of~$\Z^2$,
so the shapes of disjoint regions (\latin{e.g.}, tiles in a tiling) may intersect on their boundaries,
but \emph{simply connected} refers to the shapes of regions.}
We say a finite region is an (\emph{ordinary}) \emph{tile} if its shape is simply connected (s.c.).
A finite collection of tiles is called a \emph{tileset}.
Note that $\Z^2$ acts naturally on a tile by translation.
Given a region $\G$ and a tileset $\T$, a \emph{$\T$-tiling} is a partition of $\G$ into translated copies of tiles in~$\T$.
A region is \emph{$\T$-tileable} if it admits a $\T$-tiling.
When the tileset $\T$ is understood, we may suppress the prefix for notational convenience.

The \emph{boundary} of (the shape of) a tile consists of unit-length horizontal and vertical \emph{edges}.
A (\emph{generalized}) \emph{Wang tile} is a tile whose edges are labeled with symbols that are referred to as \emph{colors}.
When tiling with Wang tiles, incident edges must be the same color.
When a Wang tile is a single square, it is also called a \emph{Wang square}.%
\footnote{In the literature, a \emph{square Wang tile} is usually simply called a \emph{Wang tile} and sometimes called a \emph{domino} despite being a single square.}
To emphasize the distinction between ordinary tiles and Wang tiles, we may refer to the former as \emph{polyominoes}.

Consider the following decision problem:

\problemdef{Tileability}
{A tileset $\T$ and a region $\G$.}
{Does $\G$ admit a $\T$-tiling?}

Berger proved that \problem{Tileability} for Wang squares is undecidable when $\G=\Z^2$ is the whole plane~\cite{Ber}.
By straight-forward reductions (see Lemma~\ref{l:reduce}),
most problems involving tileability of polyominoes, generalized Wang tiles, and Wang squares are equivalent.
In this paper we will consider a few other tileability problems that are undecidable.
We will usually state theorems for polyominoes, but prove them (first) for Wang tiles,
then appeal to the reductions in the appendix.

\section{Rectangular Tileability}\label{s:rect}

Here we consider the decision problem \problem{Rectangular Tileability},
where the input is a tileset and the region to be tiled is unspecified.

\problemdef{Rectangular Tileability}
{A tileset~$\T$.}
{Does there exist \emph{some} rectangle that is tileable by $\T$?}

By an \emph{alphabet} $\Sigma$ we mean a set of symbols,
whose elements could be juxtaposed to form \emph{words}.
Let $\Sigma^*$ denote the set of all finite words in the alphabet~$\Sigma$.
The \problem{Post Correspondence Problem} is a well-known undecidable decision problem:

\problemdef{PCP}
{An alphabet $\Sigma$, positive integer $n$, and words $j_t,k_t\in\Sigma^*$ for $t\in[n]$.}
{Does there exists a positive integer $d$ and $e_i\in[n]$ for $i\in[d]$, such that\\
&the concatenated words $j_{e_1}j_{e_2}\ldots j_{e_d}$ and $k_{e_1}k_{e_2}\ldots k_{e_d}$ are the same?}

\begin{thm}[\cite{Post}]
\problem{PCP} is undecidable.
\end{thm}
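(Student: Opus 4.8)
The plan is to derive the undecidability of \problem{PCP} from the undecidability of the halting problem for Turing machines, routing the argument through an auxiliary variant, the \emph{Modified Post Correspondence Problem} (\problem{MPCP}), which is \problem{PCP} together with the extra requirement that a solution begin with the first index, i.e.\ $e_1 = 1$.

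First I would reduce the halting problem to \problem{MPCP}. Given a Turing machine $M$ and an input $w$, build an \problem{MPCP} instance over the alphabet formed by $M$'s tape alphabet, its state set, and a fresh separator $\#$. Declare the first pair to be $(\#,\ \# q_0 w\#)$; since $\#$ is a prefix of $\# q_0 w\#$, this ``primes'' the bottom concatenation to run exactly one encoded configuration ahead of the top. Add ``copy'' pairs $(a,a)$ for every tape symbol $a$ and for $\#$; for each transition of $M$ add a pair that rewrites the window around the head exactly as $M$ does (for instance $(qa,\ br)$ for a right move $\delta(q,a)=(r,b,\mathrm{R})$, and $(cqa,\ rcb)$, one for each tape symbol $c$, for a left move $\delta(q,a)=(r,b,\mathrm{L})$); add pairs such as $(a\#,\ \sqcup a\#)$ to grow the tape at its right end; and, once a halting state $h$ appears, add ``cleanup'' pairs $(ha,\ h)$ and $(ah,\ h)$ that delete the symbols flanking $h$, together with a terminating pair $(h\#\#,\ \#)$. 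The invariant to verify is that in any partial solution the bottom word equals the top word followed by the current configuration of $M$ and a $\#$; unwinding it shows that a complete match exists if and only if the cleanup pairs can eventually collapse the configuration, i.e.\ if and only if $M$ halts on $w$.

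Next I would reduce \problem{MPCP} to \problem{PCP}. Given an \problem{MPCP} instance with pairs $(j_i,k_i)$, introduce fresh symbols $\star,\diamond$, and for a word $u=u_1\cdots u_m$ write $\star u = \star u_1\star u_2\cdots\star u_m$, $u\star = u_1\star u_2\star\cdots u_m\star$, and $\star u\star$ for both. Form the \problem{PCP} instance consisting of the pairs $(\star j_i,\ k_i\star)$ for all $i$, a genuine start pair $(\star j_1,\ \star k_1\star)$, and a terminator $(\star\diamond,\ \diamond)$. The interleaved $\star$'s make the start pair the only one whose two sides share a first symbol, so it must be used first; thereafter the $\star$-shifted pairs simulate the \problem{MPCP} run with the top word chasing the bottom; and the terminator can be appended only after the two underlying words have already matched. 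Erasing all $\star$'s and the trailing $\diamond$ converts a \problem{PCP} solution into an \problem{MPCP} solution and conversely. Composing the two reductions yields a computable map from instances of the halting problem to instances of \problem{PCP} that preserves membership, so \problem{PCP} is undecidable. (If a two-letter alphabet is desired, replace each symbol by a fixed-length binary block; solutions are unaffected.)

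The hard part will be the verification in the first reduction: one must check carefully that the configuration-chasing invariant is preserved by every legal pair and, more delicately, that no sequence of pairs can produce a full match \emph{without} faithfully simulating $M$ to a halt --- in particular that the cleanup and terminator pairs cannot fire prematurely. The \problem{MPCP}-to-\problem{PCP} step is essentially bookkeeping once the $\star$-phasing argument is set up correctly.
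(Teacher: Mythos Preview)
The paper does not prove this theorem; it merely states it with a citation to Post~\cite{Post} and then uses it as a black box for the reduction in Lemma~\ref{l:wang}. So there is no ``paper's own proof'' to compare against.

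Your proposal is the standard textbook argument (Halting $\to$ \problem{MPCP} $\to$ \problem{PCP} via the $\star$-interleaving trick, essentially the Hopcroft--Ullman/Sipser presentation), and the outline is correct. The invariant you identify for the first reduction --- that the bottom word is always the top word followed by one more configuration --- is exactly the right thing to prove, and you are right that the delicate point is ruling out spurious matches that do not correspond to a genuine halting run. One small caution: in the transition pairs you should also handle the boundary case where the head is at the left end and the machine moves left (or declare that $M$ never does so), and you may need a left-extension pair symmetric to your $(a\#,\sqcup a\#)$ depending on your Turing machine conventions. But none of this is needed for the paper, which simply quotes the result.
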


Given a \problem{PCP} instance~$I$,
we will construct a tileset~$\T$
such that $\T$ tiles some rectangle if and only if $I$ admits a solution,
and thus proving Theorem~\ref{t:rect}.

\begin{lemma}\label{l:wang}
Given a \problem{PCP} instance~$I$,
there is a computable%
\footnote{The main proof technique of undecidability is by reducing a known undecidable problem to the current problem.
This transformation must be done in a computable manner, \latin{i.e.}, by a deterministic algorithm.
Thus \emph{computable} means that the existence of the object in question can be substantiated by an explicit construction.}
tileset $\W$ of generalized Wang tiles
such that $\W$ tiles a white rectangle if and only if $I$ admits a solution.
\end{lemma}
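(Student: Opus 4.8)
The plan is to encode a \problem{PCP} instance $I = (\Sigma, n, \{j_t, k_t\})$ into a set of generalized Wang tiles so that a tiled white rectangle is forced, column by column, to read out a candidate solution sequence $e_1, \ldots, e_d$, while simultaneously checking that the two concatenations $j_{e_1}\cdots j_{e_d}$ and $k_{e_1}\cdots k_{e_d}$ agree letter by letter. The key idea is to build the rectangle so that one designated horizontal strip carries the ``$j$-word'' being spelled out and another strip carries the ``$k$-word,'' and the geometry (the tile shapes, which may be wider than one unit) forces the two words to have equal length and equal content.

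First I would set up the coordinate conventions: the rectangle has a bottom boundary, a top boundary, and two vertical sides, all of which must be matched by a boundary color (say color $\B$, the ``white'' boundary) so that the tileset only tiles a genuine rectangle when the boundary colors close up. Then I would introduce, for each $t \in [n]$, a \emph{block of tiles} that represents ``applying index $t$'': this block, read left to right, lays down the letters of $j_t$ along the $j$-row and the letters of $k_t$ along the $k$-row. Since $j_t$ and $k_t$ generally have different lengths, the block must be $\max(|j_t|, |k_t|)$ columns wide (or we pad with ``blank'' letters), and here is where generalized Wang tiles (tiles wider than one square, with colored edges) earn their keep: a single wide tile, or a short row of unit tiles glued by internal colors, can enforce that a whole $j_t$-string and a whole $k_t$-string are emitted together as an atomic unit, preventing the tiling from ``cheating'' by mixing fragments of different blocks. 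Between consecutive blocks I would place a ``separator'' column whose colors allow any index $t$ to follow any index $t'$, so the sequence $e_1 e_2 \cdots e_d$ is unconstrained; the left side of the rectangle forces the sequence to \emph{start} cleanly and the right side forces it to \emph{end} cleanly.

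The heart of the construction is the \textbf{synchronization mechanism}: I need the letter emitted in column $c$ of the $j$-row to be forced equal to the letter emitted in column $c$ of the $k$-row, for every column $c$. The cleanest way is to run a ``comparison row'' between the $j$-row and the $k$-row (or to fold the two rows adjacent to each other) and let the vertical edge colors between them carry the letter of $\Sigma$; a tile is permitted only when its ``letter from above'' matches its ``letter from below.'' Then a tiling of a rectangle of some width $W$ exists precisely when there is a way to write $W$ as a sum of block-widths $|e_1| + \cdots + |e_d|$ (all blocks, after padding, the same effective width, so $W$ is just a multiple) such that the concatenated $j$-sequence and $k$-sequence, when placed in the same columns, agree everywhere --- which is exactly the condition that $j_{e_1}\cdots j_{e_d} = k_{e_1}\cdots k_{e_d}$. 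The ``if'' direction (a \problem{PCP} solution yields a tileable white rectangle) is then a direct construction; the ``only if'' direction requires arguing that \emph{every} tiling of \emph{any} rectangle decomposes into valid blocks and hence reads off a genuine solution.

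I expect the main obstacle to be precisely this rigidity argument in the ``only if'' direction: ensuring that the tileset admits \emph{no unintended tilings}. One must rule out tilings that mix block-fragments, tilings that use the comparison-row tiles in vertically shifted or misaligned ways, tilings that exploit the padding/blank letters to desynchronize the two words, and degenerate tilings of thin rectangles (width $0$ or height too small) that trivially satisfy all constraints without encoding anything. The standard remedy is to give each row a distinct ``track color'' so that $j$-row tiles can only sit in the $j$-row, $k$-row tiles only in the $k$-row, etc., making the horizontal strips rigid, and then to use the boundary colors top and bottom to pin down the number of rows exactly; within each strip, a careful choice of the internal gluing colors of each block forces left-to-right atomicity. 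Getting all these colors to be simultaneously consistent --- and checking that the only surviving tilings are the intended ones --- is the bookkeeping-heavy core of the proof, and I would organize it as a short sequence of claims: (1) every tiled region that is a rectangle has the correct height; (2) each horizontal strip is partitioned into complete blocks; (3) the blocks in the $j$-strip and the $k$-strip are column-aligned; (4) alignment plus the comparison constraint forces the two concatenations to coincide.
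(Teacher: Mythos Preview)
Your proposal has a genuine structural gap in the synchronization mechanism. You bundle $j_t$ and $k_t$ into a single atomic block occupying the same range of columns (padded to width $\max(|j_t|,|k_t|)$), and then compare the $j$-row against the $k$-row column by column. But that compares each padded $j_{e_i}$ against the padded $k_{e_i}$ in isolation, not the concatenation $j_{e_1}\cdots j_{e_d}$ against $k_{e_1}\cdots k_{e_d}$. The whole point of \problem{PCP} is that $|j_t|$ and $|k_t|$ may differ, so the $c$-th letter of the $j$-concatenation and the $c$-th letter of the $k$-concatenation generally come from blocks at \emph{different} positions in the index sequence. Concretely: with $j_1=a$, $k_1=ab$, $j_2=bb$, $k_2=b$ the sequence $(1,2)$ is a \problem{PCP} solution, yet your column-aligned blocks produce $j$-row $a\,\square\,b\,b$ versus $k$-row $a\,b\,b\,\square$; if blanks are forbidden from matching letters this valid solution is rejected, and if blanks match freely then the single pair $j_1=a$, $k_1=ab$ (which has no solution) is accepted. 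Your claim~(3), that the $j$-strip and $k$-strip blocks are column-aligned, is precisely what must \emph{not} hold.

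The paper resolves this by decoupling the two rows: a top row of $1\times|j_t|$ tiles $\J_t$ and a bottom row of $1\times|k_t|$ tiles $\K_t$, each with its natural width and no padding, so that the column-by-column comparison in between is exactly the \problem{PCP} equality. The delicate part---forcing both rows to use the same index sequence $e_1,\ldots,e_d$ even though the blocks are horizontally misaligned---is handled by tagging the \emph{first} letter of each word with its index $t$ and introducing \emph{transmitter} tiles in the intermediate rows that let each tag drift one column left or right per row while passing the letter straight down. A tag emitted below $\J_{e_i}$ can then slide diagonally to meet the first column of $\K_{e_i}$; because the horizontal colors $(t,L)$ and $(t,R)$ prevent tags from crossing one another, the order of tags is preserved and the two index sequences are forced to coincide. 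This tag-shifting device is the missing idea in your plan.
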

\begin{proof}
For each $j_t=x_1x_2\ldots x_r\in\Sigma^*$,
create a $1\times r$ tile $\J_t$ whose top boundary edges are all colored~$U$, the bottom color sequence is $(x_1,t),x_2,\ldots,x_r$,
and the vertical sides are colored~$V$.
Similarly for $k_t=y_1y_2\ldots y_s\in\Sigma^*$,
create a $1\times s$ tile $\K_t$ whose top boundary is $(y_1,t),y_2,\ldots,y_s$, the bottom colored~$D$,
and the vertical sides are colored~$V$ (\fig{pcp}).
For each $x\in\Sigma$ and $t\in[n]$, create the following six \emph{transmitter tiles} in \fig{tx}.
Finally, add in the \emph{border tiles} in \fig{border}, which are the only ones with \emph{white} (unlabeled) colors.
This finishes the construction of the tileset~$\W$.
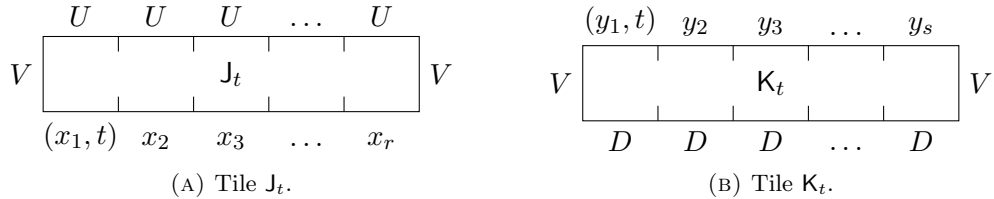
\begin{figure}[hbtp]
\subfloat[Tile $\J_t$.]{
   \begin{tikzpicture}
   \draw (0,0) rectangle (5,1);
   \foreach \x in {1,2,...,4}
   {
      \draw (\x cm, 0 cm) -- (\x cm, 0.2 cm);
      \draw (\x cm, 1 cm) -- (\x cm, 0.8 cm);
   }
   \node at (2.5,0.5) {$\J_t$};

   \node[below] at (0.5,0) {$(x_1,t)$};
   \foreach \x in {1,2,...,3}
      \node[above,xshift=\x cm] at (-0.5,1) {$U$};
   \foreach \x in {2,3,...,3}
      \node[below=4pt,xshift=\x cm] at (-0.5,0) {$x_\x$};
   \node[below=8pt] at (3.5,0) {$\ldots$};
   \node[above] at (3.5,1) {$\ldots$};
   \node[below=4pt] at (4.5,0) {$x_r$};
   \node[above] at (4.5,1) {$U$};

   \node[left] at (0,0.5) {$V$};
   \node[right] at (5,0.5) {$V$};
   
   \end{tikzpicture}
}
\qquad
\subfloat[Tile $\K_t$.]{
   \begin{tikzpicture}
   \draw (0,0) rectangle (5,1);
   \foreach \x in {1,2,...,4}
   {
      \draw (\x cm, 0 cm) -- (\x cm, 0.2 cm);
      \draw (\x cm, 1 cm) -- (\x cm, 0.8 cm);
   }
   \node at (2.5,0.5) {$\K_t$};

   \node[above] at (0.5,1) {$(y_1,t)$};
   \foreach \x in {1,2,...,3}
      \node[below,xshift=\x cm] at (-0.5,0) {$D$};
   \foreach \x in {2,3,...,3}
      \node[above=0pt,xshift=\x cm] at (-0.5,1) {$y_\x$};
   \node[below=6pt] at (3.5,0) {$\ldots$};
   \node[above=0pt] at (3.5,1) {$\ldots$};
   \node[above=0pt] at (4.5,1) {$y_s$};
   \node[below] at (4.5,0) {$D$};

   \node[left] at (0,0.5) {$V$};
   \node[right] at (5,0.5) {$V$};
   
   \end{tikzpicture}
}
   \caption{PCP tiles.}
   \label{fig:pcp}
\end{figure}

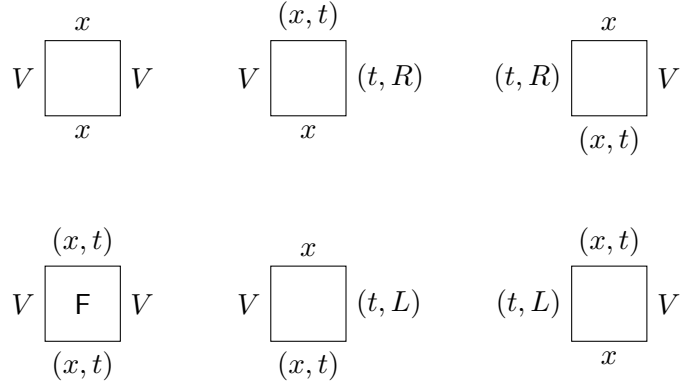
\begin{figure}[hbtp]
   \begin{tikzpicture}
   \tile{V}{x}{x}{V}03
   \tile{V}{(x,t)}{x}{(t,R)}33
   \tile{(t,R)}{x}{(x,t)}{V}73
   \tilen{V}{(x,t)}{(x,t)}{V}00{\tf F}
   \tile{V}{x}{(x,t)}{(t,L)}30
   \tile{(t,L)}{(x,t)}{x}{V}70
   \end{tikzpicture}
   \caption{Transmitter tiles.}
   \label{fig:tx}
\end{figure}

\begin{figure}[hbtp]
   \begin{tikzpicture}
   \tile{}{}{L}{T}06
   \tile{T}{}{U}{T}36
   \tile{T}{}{R}{}66
   \tile{}{L}{L}{V}03
   \tile{V}{R}{R}{}63
   \tile{}{L}{}{B}00
   \tile{B}{D}{}{B}30
   \tile{B}{R}{}{}60
   \end{tikzpicture}
   \caption{Border tiles.}
   \label{fig:border}
\end{figure}
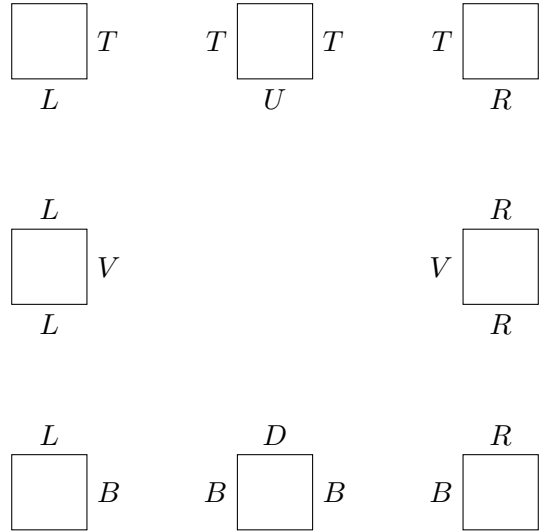

If $I$ has a solution $e_1,\ldots,e_d$,
we first put $\J_{e_1},\J_{e_2},\ldots,\J_{e_d}$ in a row on top,
$\K_{e_1},\K_{e_2},\ldots,\K_{e_d}$ in a row on the bottom.
If we view the color $(x,t)$ as $x$, then the bottom of the top row and the top of the bottom row have the same color sequence.
The transmitter tiles allow ``shifting'' the tag $t$ to the left or right one step at a time until they match up as well.
We thus have a rectangle with the top and bottom colored $U$ and $D$, respectively, while the vertical sides are colored~$V$.
Notice that the border tiles can tile a rectangular border of any size, whose outside boundary is white,
and the inside colors match exactly what we have.

Conversely, suppose $\W$ tiles a white rectangle.
Consider the smallest white rectangle it can tile.
The border tiles occur only on the boundary.
Indeed, if there is any border tile in the interior, by construction, it must form a white rectangular frame with filled interior,
a contradiction to the minimality.
Now remove the border tiles on the boundary,
the top (resp.\ bottom) row must be a sequence of $\J_t$ (resp.\ $\K_t$) tiles.
Let $e_1,\ldots,e_d$ be the indices of the top sequence.
By construction of the transmitter tiles, the bottom sequence share the same indices.
Furthermore, viewing the color $(x,t)$ as $x$, the color sequences are the same.
This means that the concatenated words coincide, and we indeed extracted a solution to~$I$.
\end{proof}

\begin{cor} \label{cor:poly}
Given a \problem{PCP} instance~$I$,
there is a computable tileset $\T$ of polyominoes
such that $\T$ tiles a rectangle if and only if $I$ admits a solution.
\end{cor}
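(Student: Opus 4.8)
The plan is to deduce this from Lemma~\ref{l:wang} by the standard simulation of generalized Wang tiles by polyominoes, which is the content of Lemma~\ref{l:reduce} in the appendix. First I would take the generalized Wang tileset $\W$ produced by Lemma~\ref{l:wang}, together with its (finite) set of colors. Pick an integer $N$ large enough that every color can be assigned a distinct bit string of length $N$, preceded and followed by a fixed ``framing'' pattern; reserve the all-flat pattern for the white (unlabeled) color used by the border tiles.

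Next I would replace each generalized Wang tile $W \in \W$ by a polyomino $P_W$, obtained by scaling $W$ by the factor $N$ so that each unit boundary edge of $W$ becomes a length-$N$ segment, and then carving along that segment a pattern of unit bumps and dents spelling out the bit string of the color on that edge, with the convention that a $0$-bit on one side mates with a $1$-bit on the other. Thus two edge segments fit together flush if and only if they carry the same Wang color; white edges are left perfectly flat. One checks that each $P_W$ is finite and simply connected, so it is a legitimate polyomino, and that the framing pattern rigidly enforces the scaled integer grid: in any tiling of a simply connected region there is no room to place a copy of $P_W$ at a non-grid offset or to mate two bit strings ``out of phase.'' Since only translations are permitted, no rotation or reflection issues arise. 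Set $\T = \{P_W : W \in \W\}$, which is clearly computable from $I$.

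With this dictionary, a $\W$-tiling of a white rectangle $R$ pulls back to a $\T$-tiling of the scaled rectangle $N \cdot R$ by replacing each Wang tile with the corresponding polyomino, and, conversely, by the grid-locking property and the flatness of the outer white edges, any $\T$-tiling of a rectangle is the image of a $\W$-tiling of a white rectangle. Hence $\T$ tiles a rectangle if and only if $\W$ tiles a white rectangle, which by Lemma~\ref{l:wang} happens if and only if $I$ admits a solution.

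The main obstacle is the faithfulness of the bump encoding: the per-edge pattern (bit string plus framing) must be designed so that distinct colors cannot interlock, matching colors interlock in exactly one way, and the whole arrangement forbids sliding a polyomino by a fraction of $N$. This is precisely what Lemma~\ref{l:reduce} provides, so here it suffices to invoke it; but it is the step that demands the real care, and it is the reason the results in this paper are established first for Wang tiles and only then transported to polyominoes.
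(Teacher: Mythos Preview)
Your proposal is correct and follows essentially the same route as the paper: take the Wang tileset $\W$ from Lemma~\ref{l:wang}, convert to polyominoes via the encoding of Lemma~\ref{l:reduce}, and treat the white color as a flat edge so that the outermost frame can abut a genuine rectangle. The one structural difference is that the paper, for the converse direction, passes to a \emph{smallest} $\T$-tileable rectangle and re-runs the minimality argument at the polyomino level (border tiles must sit on the boundary and cannot recur in the interior), reconstructing the situation of Lemma~\ref{l:wang} by hand; you instead assert the correspondence ``$\T$-tiling of a rectangle $\Leftrightarrow$ $\W$-tiling of a white rectangle'' outright and let the minimality argument already contained in Lemma~\ref{l:wang} finish the job. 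Both are valid, and yours is arguably tidier, but note that your grid-locking justification (``the framing pattern rigidly enforces the scaled integer grid'') is literally about the non-white edges only, since white edges carry no framing; the alignment of the border pieces along the rectangle's sides is really forced through their \emph{colored} edges, which is worth saying explicitly.
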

\begin{proof}
Take the set $\W$ of generalized Wang tiles afforded by Lemma~\ref{l:wang},
and transform it to a set $\T$ of polyomino tiles using Lemma~\ref{l:reduce} in the appendix with a minor change.
In the construction, we will replace the color white by a straight boundary,
including the omission of the corner zig-zags.

If $I$ admits a solution, then there is a tiling of a white rectangle by~$\W$.
Replace each Wang tile by its corresponding polyomino in~$\T$.
By construction, they fit together to form a tiling of a rectangle by~$\T$.

Conversely, suppose $\T$ tiles some rectangle.
Consider the smallest rectangle tileable by $\T$ and fix one such tiling.
Note that the only tiles that can touch the boundary of the rectangle are the border tiles
(\latin{i.e.}, those tiles in~$\T$ that correspond to the border tiles in~$\W$),
lest there be small holes that cannot be filled.
Conversely, the border tiles cannot occur in the interior by minimality,
since they always form rectangular frames.
We have replicated the exact same situation as in the proof of Lemma~\ref{l:wang},
allowing us to extract a minimal Wang tiling of a white rectangle,
and thus a solution to~$I$, as desired.
\end{proof}

This concludes the proof of Theorem~\ref{t:rect}.
Moreover, we can allow the pieces to be rotated, reflected, or even dilated.

\begin{cor} \label{cor:similar}
It is undecidable whether a tileset $\T$ can tile a unit square with finitely%
\footnote{If infinitely many similar copies are allowed, any tileset can tile a unit square greedily in a manner akin to that of an Apollonian gasket.}
many similar copies of its tiles.
\end{cor}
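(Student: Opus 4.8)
The plan is to reduce \problem{PCP} to this problem by re-using, almost verbatim, the tileset $\T$ that Corollary~\ref{cor:poly} attaches to a \problem{PCP} instance~$I$. The only extra care needed is that the colour-to-boundary encoding behind Lemma~\ref{l:reduce} be \emph{rigid}: each colour should be realised by a jagged edge profile that (i) is fixed by no non-trivial rotation or reflection and is never the mirror image of the profile of any colour (including itself), so that no rotated or reflected copy of a tile can occur in \emph{any} tiling; and (ii) is geometrically self-interlocking, so that two tiles meeting along a matched pair of jagged edges are forced to have the \emph{same} scale and to sit on a common grid. Encoding each colour as a short word over a two-letter ``tooth/notch'' alphabet, with notches realised by small square indentations, achieves both, and one should simply record that Lemma~\ref{l:reduce} may be taken to output such polyominoes. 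The only straight edges left are the ``white'' outer edges of the border tiles of \fig{border}.

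The easy direction is purely metric. If $I$ has a solution, Corollary~\ref{cor:poly} gives a $\T$-tiling of some $[n\times m]$ rectangle~$R$; placing $m$ translated copies of $R$ in a row and stacking $n$ such rows produces a $\T$-tiling of the $[mn\times mn]$ square, and scaling this tiling down by the factor $1/(mn)$ yields a tiling of the unit square by finitely many translated-and-dilated — hence similar — copies of tiles of~$\T$.

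For the converse, suppose $\T$ tiles the unit square $S$ by finitely many similar copies, and fix such a tiling. By rigidity~(i) every tile occurs in standard orientation. Since there are finitely many tiles, a well-defined tile occupies the corner $(0,0)$ of $S$, and it has a right-angled corner there with two edges running along $\partial S$; a jagged edge cannot lie on the straight line $\partial S$ (teeth would poke out of $S$, notches would have to be filled from outside~$S$), so those two edges are white and the tile is one of the corner border tiles of \fig{border}. Let $B$ be the union of all tiles reachable from it through chains of matched jagged edges. By rigidity~(ii) all tiles of $B$ share one scale $\mu$ and one grid, and by maximality every edge of $\partial B$ is either white or lies on $\partial S$ — hence is straight. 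But the border tiles of \fig{border} are exactly the pieces designed to assemble into rectangular frames with white outside, so $B$ is a rectangle (possibly all of $S$) that is $\T$-tiled at scale~$\mu$; rescaling by $1/\mu$ exhibits a genuine $\T$-tiling, by translations only, of some $[a\times b]$ rectangle. By Corollary~\ref{cor:poly} this forces $I$ to have a solution, completing the reduction.

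The step carrying the weight is the converse, and inside it the two rigidity properties of the encoding. Property~(i) must genuinely defeat reflections, which is why the tooth/notch encoding has to be \emph{chiral}: no colour's profile may equal the left-to-right reversal of any colour's profile. Property~(ii) must coexist with the straight white edges, which is why I would confine those edges to the outward sides of the border tiles and lean on the rectangular-frame grammar already used in the proof of Lemma~\ref{l:wang} — this is what guarantees that the white-and-$\partial S$ boundary of the chunk $B$ really bounds a rectangle, and it is the one place where a delicate case analysis (border tiles meeting white-to-white, or a chunk hugging a side of $S$) needs to be checked. Once the encoding is pinned down so that jagged edges force a common scale and white edges can only ride along a straight boundary, the corner-chunk argument closes the proof.
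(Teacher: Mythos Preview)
Your reduction and the forward direction coincide with the paper's. The converse is organised differently. The paper does not anchor at a corner of the unit square; instead it takes $S$ to be the union of all tiles at the \emph{minimum} scaling factor occurring in the tiling. At that scale every corner zig-zag of Lemma~\ref{l:reduce} can only be matched by a tile of the same scale (larger features do not fit, and there are no smaller tiles), so $\partial S$ is made up entirely of white edges and hence is carried by border tiles. Those border tiles fall into rectangular frames; by finiteness there is an innermost one, whose interior therefore contains no further boundary of $S$ and is tiled at the minimum scale throughout, in a single orientation forced by the zig-zags. That framed rectangle, rescaled, is the translation-only tiling.

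What the minimum-scale device buys is exactly the point where your argument is thinnest: your rigidity~(ii) says that \emph{two} tiles meeting along a matched pair of jagged edges share a scale, but the corner tile you start from sits at some scale~$\mu_0$ that need not be minimal, and you must separately exclude the possibility that one of its jagged edges is matched by a \emph{combination} of several strictly smaller tiles whose own jagged boundaries conspire to reproduce the profile. That can be argued away with a careful enough encoding, but the paper's route makes it automatic. Separately, your rigidity~(i) is overstated: a global reflection of any similar tiling of the square is again a similar tiling of the square, so reflected copies certainly can occur. What the chiral corner zig-zags actually give --- and what both arguments need --- is only that tiles sharing a jagged edge share an orientation; the paper invokes precisely this, locally, once the scale has been pinned down.
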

\begin{proof}
We claim that the specific tiles $\T$ in the proof of Corollary~\ref{cor:poly} tile a rectangle by translation alone if and only if 
it admits a similar tiling of the unit square.

Suppose $\T$ tiles some $[a\times b]$ rectangle by translation.
By repeating the rectangle $ab$ times, we have a tiling of an $[ab\times ab]$ square.
Dilating, we obtain a similar tiling of the unit square.

Conversely, suppose $\T$ admits a similar tiling of the unit square.
Let $S$ be the union of all tiles with the minimum scaling factor.
All the corner zig-zag of tiles in $S$ must be matched by other tiles of the same scaling factor.
As such, all tiles that touch the boundary of $S$ must be border tiles.
In the proof of Lemma~\ref{l:reduce},
we note that the interlocking corner zig-zags force all tiles to have the same orientation.
This works even when we removed the corner zig-zags for the white edges.
In this case, each border tile must be part of a rectangular frame with all its border tiles in the same orientation.
Since there are finitely many rectangular frames, there exists one without border tiles within.
This rectangular frame is fully tiled in its interior by tiles at this minimum scaling factor.
Indeed, if there were any gaps, that would be a boundary of $S$, a contradiction.
Finally, note that all tiles in this rectangular region has the same orientation.
Thus $\T$ tile some rectangle by translation alone, as desired.
\end{proof}

Theorem~\ref{t:rect} remains true even if the number of tiles is bounded.

\begin{thm}
\problem{Rectangular Tileability} with $19$ tiles is undecidable.%
\footnote{Of course, the number of tiles can be incremented by splitting a tile in such a way that the pieces must reassemble to form the previous tile, or by adding tiles that \emph{cannot} be used in any tiling of rectangles.}
\end{thm}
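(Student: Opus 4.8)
The plan is to re-engineer the tileset produced in Lemma~\ref{l:wang} (and then Corollary~\ref{cor:poly}) so that its cardinality is an \emph{absolute constant}, and finally to inflate that constant up to exactly $19$ by the trivial moves noted in the footnote: split a tile into interlocking pieces that must reassemble, or adjoin tiles that can never occur in a rectangular tiling. As written, that construction uses $2n$ word-tiles $\J_t,\K_t$, a family of $6\abs\Sigma n$ transmitter tiles, and $8$ border tiles; the $8$ border tiles are already a constant, so the task is to kill the dependence on the alphabet size $\abs\Sigma$, on the number of \problem{PCP} pairs $n$, and the multiplicative factor in the transmitter family, pushing the instance data into \emph{sizes} (for polyominoes) or \emph{colours} (for Wang tiles) rather than into a growing number of tiles.

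Concretely, I would first reduce to a binary alphabet: \problem{PCP} stays undecidable when $\Sigma=\{0,1\}$ (encode each symbol as a fixed-length block of bits), so only two ``letter'' colours remain. Next I would carry the word-index tag through the tiling \emph{bitwise}: instead of the atomic colour $(x,t)$ the tiles carry a pair $(x,\bar t)$ with $\bar t$ the binary expansion of the index, and a fixed, $n$-independent repertoire of transmitter tiles shifts such a bounded-width packet one cell left or right per row, so the entire transmitter layer now costs $O(1)$ tiles. It then remains to compress the word-tiles themselves; here I would fuse $j_1,\dots,j_n$ (and, separately, $k_1,\dots,k_n$) into a single long polyomino whose notch pattern spells out the whole list with separators, and add a small fixed ``reading-head'' gadget that, once per chosen index $e_i$, enters that block, locates the $e_i$-th entry, and emits exactly its letters into the $\J$- and $\K$-rows. (Alternatively one may start from a known strengthening of \problem{PCP} in which the number of pairs is bounded, so that $2n$ is a constant outright.) The border tiles are untouched, the conversion from Wang tiles to polyominoes via Lemma~\ref{l:reduce} changes the count by at most a constant, and the minimality-of-the-rectangle argument of Lemma~\ref{l:wang}, together with the interlocking-corner argument of Lemma~\ref{l:reduce}, carries over to show that the rebuilt $\T$ tiles some rectangle iff the \problem{PCP} instance has a solution. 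Counting letter tiles, reading-head and transducer tiles, bitwise transmitters, the two instance polyominoes, and the $8$ border tiles gives a fixed number, which a final padding step brings to $19$.

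The delicate point, where essentially all the work sits, is the word/instance layer: one must choose the notches (or colours) so that in a \emph{minimal} rectangle the only consistent completion forces the reading head to traverse the instance block precisely once per chosen index and to emit exactly the letters of $j_{e_i}$ (resp.\ $k_{e_i}$), while simultaneously proving that this enlarged collection of interlocking shapes admits \emph{no} unintended minimal rectangle anywhere. This is exactly the role the corner zig-zags play in Lemma~\ref{l:reduce}; once that rigidity is verified, the reduction is correct and the head count, after padding, is $19$.
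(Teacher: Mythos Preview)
Your strategy of pushing the instance data into tile \emph{shapes} rather than tile \emph{count} is the right instinct, and a ``some absolute constant'' claim is plausible along these lines. The paper, however, takes a very different and much shorter route to the specific number $19$: it invokes Ollinger's result~\cite{Oll}, which already packages exactly this idea into a proved theorem --- for \emph{any} set $\W$ of Wang squares there are $11$ polyominoes $\T$ (whose shapes encode $\W$) such that $\T$ tiles the plane iff $\W$ does. One then adds $8$ suitably designed border tiles so that the resulting $\T'$ tiles some rectangle iff $\W$ tiles some white rectangle, and the latter is undecidable by Lemma~\ref{l:wang}. That gives $11+8=19$ directly, with the hard ``reading-head/instance-block'' work you correctly flag as ``where essentially all the work sits'' already done and citable.

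The genuine gap in your proposal is arithmetic: you never verify that your count comes out \emph{at most} $19$, yet the padding moves in the footnote can only \emph{raise} the count. Your parenthetical alternative --- bounded-pair \problem{PCP} --- is exactly what the paper uses to establish a first finite bound, and the paper computes that bound explicitly as $2n+m+4nm+8=80$ (with $n=7$ pairs over a binary alphabet $m=2$, even dropping the $\tf F$ transmitter), well above $19$. Your primary route spends $8$ border tiles and $2$ instance polyominoes before any gadget is built, and you give no design of the reading head or the bitwise transmitters tight enough to argue the remaining budget is at most $9$; the transmitter layer alone, even over a binary alphabet, is not obviously that cheap. Without either an explicit count or an appeal to a result like Ollinger's, the specific bound $19$ is not established.
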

\begin{proof}
We first prove that some finite number suffices.
This comes directly from the fact that \problem{PCP} remains undecidable when the number of pairs $n$ is fixed, for $n\geq7$~\cite{MS}.
Of course, by using a binary encoding of the alphabet, we may assume $m=\abs{\Sigma}=2$.
By making sure that this encoding is of even length, we may omit the transmitter tile $\tf F$ that fixes $(x,t)$, and let the labels oscillate back and forth.
This means we need only $2n+m+4nm+8$ tiles.
Taking $n=7$ and $m=2$, we see that $80$ tiles suffices.

Now we briefly sketch a proof of the claimed $19$ bound.
In~\cite{Oll}, Ollinger constructs a tileset $\T$ of $11$ polyominoes for an arbitrary Wang tileset~$\W$,
such that $\T$ tiles the plane if and only if $\W$ tiles the plane.
By adding $8$ (different) border tiles similar to the idea used above,
we obtain a tileset $\T'$ of $19$ polyominoes such that
$\T'$ tile some rectangle if and only if $\W$ tile some white rectangle.
It is worth noting that in~\cite{Oll}, the Wang squares are arranged on the square lattice grid as if they are (Aztec) diamonds.
Thus we must create a new set of ``diamond'' Wang tiles to fit this setup,
and create the border tiles to create a periodic border that fits Aztec diamonds.
We omit the easy details.
\end{proof}

\section{Complementary Tileability}\label{s:cofinite}

\subsection{Tiling cofinite regions with a fixed tileset}
Now we will consider a series of \emph{complementary} tiling problems.
First, as motivated in Section~\ref{s:def}, we consider tiling a cofinite region,
given by its finite complement in the plane.

\problemdef{Complementary Tileability}
{A tileset~$\T$ and a finite region $\G$.}
{Does the cofinite region $\Z^2\bs\G$ admit a $\T$-tiling?}

We will see that this is undecidable, by showing that the problem is undecidable even if the tileset~$\T$ is fixed:

\newcommand{\ctt}[1]{\problem{Complementary $#1$-Tileability}}
\problemdef{\ctt{\T}}
{A finite region $\G$.}
{Does the cofinite region $\Z^2\bs\G$ admit a $\T$-tiling?}

\begin{thm}\label{t:ctt}
There exists a tileset~$\T$ such that \ctt{\T} is undecidable.
\end{thm}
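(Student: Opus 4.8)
The plan is to fix a \emph{universal} tileset $\T$ for which the plane-tiling problem is already known to be undecidable, and then reduce \problem{Tileability} of $\Z^2$ by $\T$ (with a suitable finite ``seed'' region carved out) to \ctt{\T}. Concretely, I would start from Berger's theorem: there is a fixed set $\W_0$ of Wang squares for which it is undecidable, given a finite patch/constraint, whether $\W_0$ tiles the plane respecting that constraint. The natural formulation to use is the one where the input is a single Wang square $w$ that must appear (the ``origin-constrained'' or ``completion'' problem), which is the standard vehicle for these reductions; one then asks whether $\W_0$ tiles $\Z^2$ with $w$ placed at the origin. By the reductions of Lemma~\ref{l:reduce} in the appendix, I may freely translate between Wang squares, generalized Wang tiles, and polyominoes, so it suffices to build $\T$ at the Wang level.

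The key construction is as follows. Take $\T$ to consist of (a polyomino encoding of) $\W_0$ together with a small fixed family of \emph{frame} or \emph{bracket} tiles whose only role is to surround a finite hole and ``emit'' into the plane the boundary colors of a prescribed Wang square. Given an instance of the origin-constrained problem, i.e.\ a target square $w\in\W_0$, I would let the finite region $\G=\G_w$ be a fixed-shape hole (say a single unit square, or a small constant-size gadget depending on $w$) designed so that the only way to tile $\Z^2\setminus\G_w$ near $\G_w$ is to place, immediately around the hole, tiles whose colors facing outward are exactly the four edge colors of $w$. Then the rest of the plane must be tiled by the $\W_0$-part of $\T$, and a tiling of $\Z^2\setminus\G_w$ exists if and only if $\W_0$ tiles the plane with $w$ at the origin. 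Making $\G_w$ depend on $w$ only through a computable, constant-shape gadget keeps the tileset $\T$ itself fixed while the region is the input — exactly what \ctt{\T} requires. I would then invoke Corollary~\ref{cor:poly}-style polyominoization (the corner zig-zag trick from Lemma~\ref{l:reduce}, with white edges replaced by straight boundary) to convert this into genuine polyominoes, and argue as in the proof of Lemma~\ref{l:wang} that no frame tile can occur except hugging the hole, lest an unfillable gap appear.

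The main obstacle I expect is the \emph{rigidity} argument: showing that in \emph{any} tiling of $\Z^2\setminus\G_w$, the frame tiles are forced to do exactly what they were designed to do and cannot ``escape'' to infinity or form spurious configurations elsewhere. On an infinite region one loses the minimality trick available in the rectangular case, so I would instead give the frame tiles colors (or polyomino notches) that cannot be consumed by any tile of the $\W_0$-part and cannot propagate indefinitely — for instance a directed ``bracket'' color that must terminate at a concave corner of the hole — so that a simple local/parity argument forces every frame tile to lie on $\pt\G_w$. A secondary point to handle carefully is that $\G_w$ has constant shape independent of $w$: either arrange that $w$ is communicated through the \emph{position}/labeling data that is part of the region input, or, more cleanly, observe that a single unit-square hole suffices and let the at-most-$|\W_0|$ choices of which $\W_0$-tiles surround it play the role of guessing $w$, so that the reduction ranges over all $w$ automatically and undecidability of the origin-free (``does $\W_0$ tile the plane at all, given this hole'') version is inherited. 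Once rigidity is in place, correctness of the reduction in both directions is routine, and undecidability of \ctt{\T} follows immediately from Berger's theorem.
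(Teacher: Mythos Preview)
Your instinct to reduce from a seeded or completion-type plane-tiling problem is reasonable, but the black box you invoke is misidentified. Berger's theorem asserts undecidability with the \emph{tileset} as input; it does not by itself furnish a fixed $\W_0$ for which completion of a varying finite patch is undecidable. That stronger statement is true, but its standard proof is to embed a \emph{universal} Turing machine in $\W_0$ so that the patch encodes the machine's input---which is precisely the construction the paper carries out directly, so you are not saving any work by treating it as a citable black box. More seriously, your fallback suggestion (a single fixed unit-square hole, with the surrounding tiles ``guessing'' $w$) is wrong: if $\G$ does not depend on the instance then there \emph{is} no instance, and the question ``is $\Z^2\setminus\G$ tileable by $\T$?'' has a single fixed yes/no answer, hence is trivially decidable. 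A polyomino region $\G\subset\Z^2$ carries no labeling data either, so ``communicating $w$ through labeling'' is not available; $\G$ must genuinely vary in shape.

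The paper's proof is the direct one. It fixes a universal Turing machine $\M$, takes the emulating Wang tileset $\MM$ from Lemma~\ref{l:tm}, and adds only two extra squares: $\tf I$, which is forced to propagate left and right from the input region, laying down blank tape symbols and splitting the plane into two half-planes, and $\tf U$, which fills the upper half-plane trivially. The input region $\G_\x$ is a single row whose bottom edge-colors spell the initial tape for input~$\x$; passing to polyominoes via Lemma~\ref{l:reduce} turns those colors into the shape of~$\G_\x$. No frame or bracket tiles and no rigidity argument are needed: the side color $T$ on $\G_\x$ forces $\tf I$ on both sides, the top color $U$ forces $\tf U$ above, and the lower half-plane is then tileable by $\MM$ iff $\M$ does not halt on~$\x$.
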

\begin{proof}
Fix a universal Turing machine~$\M$ on an alphabet~$\Sigma$ with blank symbol $0\in\Sigma$.
Treating the initial tape configuration as the input,
it is undecidable whether $\M$ halts.
Let $\x=x_1x_2\ldots x_r\in\Sigma^*$ be a word in~$\Sigma$.
Construct region $\G_\x$ as shown in \fig{start}, where $s$ is the starting state of~$\M$.
Using the emulation of Turing machines by Wang tiles in Lemma~\ref{l:tm},
we obtain a Wang tileset~$\MM$ that emulates~$\M$.
Add the additional tiles in \fig{add}, and pass to polyominoes by Lemma~\ref{l:reduce} along with input region~$\G_\x$.
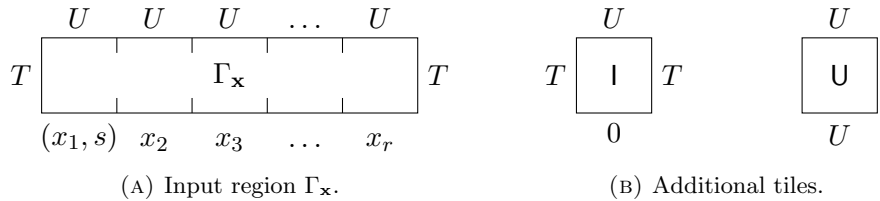
\begin{figure}[hbtp]
\subfloat[Input region $\G_\x$.]{
   \begin{tikzpicture}
   \draw (0,0) rectangle (5,1);
   \foreach \x in {1,2,...,4}
   {
      \draw (\x cm, 0 cm) -- (\x cm, 0.2 cm);
      \draw (\x cm, 1 cm) -- (\x cm, 0.8 cm);
   }
   \node at (2.5,0.5) {$\G_\x$};

   \node[below] at (0.5,0) {$(x_1,s)$};
   \foreach \x in {2,3,...,3}
      \node[below=4pt,xshift=\x cm] at (-0.5,0) {$x_\x$};
   \node[below=4pt] at (4.5,0) {$x_r$};

   \foreach \x in {1,2,...,3}
      \node[above,xshift=\x cm] at (-0.5,1) {$U$};
   \node[above] at (4.5,1) {$U$};

   \node[below=8pt] at (3.5,0) {$\ldots$};
   \node[above] at (3.5,1) {$\ldots$};

   \node[left] at (0,0.5) {$T$};
   \node[right] at (5,0.5) {$T$};
   \end{tikzpicture}
   \label{fig:start}
}
\qquad
\subfloat[Additional tiles.]{
   \begin{tikzpicture}
   \tiletn{T}{U}{0}{T}00{I}
   \tiletn{}{U}{\vphantom{()}U}{}30{U}
   \end{tikzpicture}
   \label{fig:add}
}
\caption{Initial tiles.}
\end{figure}

In a tiling, the left and right of the input region can only be tiled by (the polyomino associated with)~$\tf I$,
thus initializing the tape with the blank symbol $0$ bi-infinitely,
and separating the plane into two halves.
The upper half will be tiled by~$\tf U$.
The lower half is tileable if and only if there is a non-halting computation of~$\M$.
\end{proof}

\subsection{Tiling fixed cofinite regions}
Obviously, for some tileset~$\T$, such as a single square, this problem is decidable.
In contrast, we can fix the region $\G$ instead of the tileset~$\T$:

\newcommand{\gct}[1]{\problem{$#1$-Complementary Tileability}}
\problemdef{\gct{\G}}
{A tileset~$\T$.}
{Does the region $\Z^2\bs\G$ admit a $\T$-tiling?}

\begin{thm}
\gct{\G} is undecidable for any finite simply connected%
\footnote{Simple connectivity is necessary.
Indeed, if $\G$ has a missing unit square in its interior,
then its complement is tileable if and only if the tileset contains a single unit square.}
region~$\G$.
\end{thm}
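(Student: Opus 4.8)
The plan is to reduce from \ctt{\T} (Theorem~\ref{t:ctt}), whose tileset $\T$ is fixed, and to show that for an arbitrary finite simply connected target region $\G_0$ we can reencode an instance of \ctt{\T} so that the region-complement being tiled is exactly $\Z^2\bs\G_0$. The subtlety is that in \ctt{\T} the region $\G$ varies while the tileset is fixed, whereas here the region is fixed (and in particular simply connected) while the tileset varies. So we must absorb the ``information'' carried by the input region $\G$ in \ctt{\T} into the tileset instead.

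First I would recall the structure of the instances produced in the proof of Theorem~\ref{t:ctt}: the input region is a $1\times r$ strip $\G_\x$ whose boundary edges are colored to encode a word $\x\in\Sigma^*$, and the reduction asks whether $\Z^2\bs\G_\x$ is tileable by a fixed Wang/polyomino tileset $\MM$ (plus the tiles $\tf I,\tf U$). The key observation is that the strip $\G_\x$ is topologically trivial and its ``content'' is just the finite color sequence along its bottom edge. So instead of deleting the strip $\G_\x$ from the plane, we can delete the fixed region $\G_0$ and add to the tileset a single new \emph{seed tile} (or a small gadget of tiles) shaped so that it must occupy a specific position adjacent to $\G_0$, presenting along its exposed boundary exactly the colored edge-sequence that $\G_\x$ would have presented; everything else in the tiling then proceeds as in Theorem~\ref{t:ctt}. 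Concretely: given a \ctt{\T} instance $\G_\x$, build a tileset $\T'=\MM\cup\{\tf I,\tf U\}\cup\{\text{seed gadget for }\x\}\cup\{\text{a fixed ``frame'' tileset for }\G_0\}$, where the frame tileset is designed (using the interlocking-boundary idea and border-tile idea from Lemma~\ref{l:wang} and Corollary~\ref{cor:poly}) so that the unique way to tile the region immediately surrounding $\G_0$ forces the seed gadget into place with the correct colors, and forces the rest of the plane into the two-halves configuration of Theorem~\ref{t:ctt}. Then $\Z^2\bs\G_0$ is $\T'$-tileable iff $\M$ fails to halt on $\x$, which is undecidable; and the map $\G_\x\mapsto\T'$ is computable.

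The steps, in order: (1) fix $\G_0$ and engineer a finite ``frame'' tileset whose only tiling of a neighborhood of $\G_0$ is a prescribed periodic collar, exposing a straight colored edge at one designated location and white/matching colors elsewhere — this uses the corner zig-zag locking from Lemma~\ref{l:reduce} to kill unwanted rotations/reflections and the rectangular-frame argument to pin down positions; (2) attach to this frame a seed gadget that reads off $\x$ along that edge and then transitions to the $U$-row above and to the initial tape configuration $(x_1,s),x_2,\ldots,x_r$ feeding into $\MM$ below, exactly as $\G_\x$ did; (3) invoke the analysis in Theorem~\ref{t:ctt} verbatim to conclude the lower half is tileable iff the computation of $\M$ on $\x$ is non-halting; (4) check that the whole construction is computable from $\x$ and that no ``rogue'' tiling of $\Z^2\bs\G_0$ avoiding the intended structure exists, again by the minimal-frame / no-interior-border-tile argument.

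The main obstacle is step (1) together with the rogue-tiling check in step (4): since $\G_0$ is an \emph{arbitrary} simply connected finite region, its boundary can be geometrically complicated, and I must guarantee that the frame tileset both (a) \emph{can} collar $\partial\G_0$ in at least one way and (b) \emph{cannot} be assembled anywhere in the interior of the plane or in any alternative configuration that would let the plane be tiled without ever placing the seed gadget. Handling (a) is where simple connectivity is genuinely used — one can always build a thin collar around a simply connected region and then transition to a rectangular frame (this is also exactly why the footnote notes simple connectivity is necessary: a hole in $\G_0$ would be fillable only by the unit square). Handling (b) requires making the frame/border tiles ``one-shot'' in the same sense as the border tiles in Lemma~\ref{l:wang}, so that any occurrence of a frame tile forces a complete finite frame and hence, by a minimality argument on the tiled configuration, the intended global picture; this is fiddly but is a direct adaptation of arguments already used above, so I expect to be able to state it and defer the routine verification.
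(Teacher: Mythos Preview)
Your proposal heads in a workable direction but misses the key simplification the paper uses, and step~(4) has a genuine gap.

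The paper's argument avoids building any bespoke ``frame'' around the arbitrary region~$\G_0$. Instead it proceeds in two short moves. First, reduce general \problem{Complementary Tileability} to $\G_0=[1\times1]$: given an instance $(\T,\G)$, add $\G$ itself to the tileset, pass $\T\cup\{\G\}$ through Lemma~\ref{l:reduce} (so everything aligns on a dilated grid), scale every tile by~$2$, and delete one unit square from the tile $\G'$ corresponding to~$\G$. Grid alignment plus the parity of the scaling forces $\G'$ to be used exactly once, docked at the $[1\times1]$ hole; hence $\Z^2\setminus[1\times1]$ is $\T'$-tileable iff $\Z^2\setminus\G$ is $\T$-tileable. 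Second, reduce $[1\times1]$ to an arbitrary s.c.\ $\G_0$: enclose $\G_0$ in an $[N\times N]$ square~$S$, scale the $[1\times1]$-instance tiles by~$N$, and add $S'=S\setminus\G_0$ (split into at most two interlocking s.c.\ pieces) to the tileset. No frame gadgets, no rogue-tiling analysis; the scaling trick does all the work.

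Your step~(4) is where your plan is genuinely shaky. You invoke ``a minimality argument on the tiled configuration'' to rule out rogue tilings, but the minimality in Lemma~\ref{l:wang} works because one is tiling a \emph{finite} rectangle and may pass to the smallest one. Here the region $\Z^2\setminus\G_0$ is fixed and cofinite; there is no ``smallest'' tiling to select. You would instead need to prove directly that every tiling must place your frame tiles in the intended collar and must place the seed gadget --- in particular, that the machine tiles cannot by themselves handle the neighborhood of $\G_0$ while the frame tiles assemble into harmless closed loops elsewhere (or nowhere). This can be arranged, e.g.\ by grid alignment plus color design so that only frame tiles can abut $\partial\G_0$ and frame tiles cannot close up except around $\G_0$, but it is real work, not the ``routine verification'' you suggest, and it is strictly more effort than the paper's two-line scaling reduction.
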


In particular, even if we fix $\G$ as a single square, denoted $[1\times1]$, this problem is still undecidable.%
\footnote{This is not surprising, since tiling the entire plane ($\G=\varnothing$) is undecidable~\cite{Ber}.
In fact, the main difficulty was the lack of a starting point.
The case we consider is essentially the \emph{origin-constrained} version of tileability,
which is undecidable~\cite{Wang-aea}.
We include a sketch for completeness.}

\begin{proof}[Sketch of proof]
We modify \problem{Complementary Tileability} to have a fixed starting region.
Indeed, suppose we are given an instance consisting of a tileset $\T$ and a region~$\G$.
Let $\T'$ be obtained by passing $\T\cup\{\G\}$ to Wang tiles and back to polyominoes via Lemma~\ref{l:reduce}.
Scale all these tiles by a factor of~$2$,
and then remove a single square from the tile $\G'$ corresponding to~$\G$.
Note that $\Z^2\bs[1\times1]$ admits a $\T'$-tiling if and only if $\Z^2\bs\G$ admits a $\T$-tiling.
Indeed, by passing to Wang tiles and back, these tiles automatically align in a dilated integer lattice grid in any tiling.
Since all tiles were dilated by a factor of~$2$,
if $\G'$ is not used, there is no way to tile around the $[1\times1]$ unit square.
On the other hand, every occurrence of $\G'$ will leave a $[1\times1]$ square uncovered.
Thus $\G'$ will be used precisely once, matching the square at where it was removed.
This transforms the original complementary tiling problem to one with fixed input region~$[1\times1]$.

Fixing $\G$ as something else is similar.
Indeed, let $S$ be a large $[N\times N]$ square containing $\G$, such that $S'=S\bs\G$ is connected.
It is possible that $S'$ is not simply connected, in which case we can break $S'$ into two s.c.\ pieces that will interlock with each other only.
Given an instance $\T$ of \gct{[1\times1]}, scale all tiles in $\T$ by a factor of $N$ to obtain an equivalent \gct{[N\times N]} problem.
Now add (the two pieces of) $S'$ to the scaled tileset, reducing to \gct{\G}, as desired.
\end{proof}

\subsection{Tiling cofinite regions with rectangles}
On the positive side, somewhat surprisingly, when the tileset consists only of rectangles,
the most general case (with neither the region nor the tileset fixed) is actually decidable.

\problemdef{Complementary Tileability with Rectangles}
{A region $\G$ and a tileset~$R$ of rectangles.}
{Does the region $\Z^2\bs\G$ admit a $R$-tiling?}

\begin{proof}[Proof of Theorem~\ref{t:alg}]
We first describe the algorithm.
Suppose we are given a region $\G$ and a tileset $\T$ consisting only of rectangles.
Let $A$ be the smallest rectangular region that contains $\G$ in its interior.
Place tiles to cover $A$ (where tiles are allowed to protrude outside of $A$), such that $\G$ is uncovered.
Formally, find a region $B\supset A$ and a tiling of~$B\bs\G$.
Even though this may look like an infinite problem with $B$ unspecified,
note that it is really a finite problem, since we can exhaustively search all local tilings around~$\G$.
Either there is no way to cover $A$ while avoiding $\G$, in which case there is no tiling of~$\Z^2\bs\G$,
or we find such a $B$ and a tiling $\p$ of $B\bs\G$, in which case the complement of $\G$ is tileable.

Indeed, we may assume that $\p$ is minimal in the sense that removing any tile will leave parts of $A$ uncovered.
Consider the tiling $\p$ at the top boundary of~$A$.
It consists of some rectangles sticking out, and some that tile $A$ just right.
Regardless, simply use each of these rectangles to tile the infinite half-strip above each segment.
Similarly, tile the three half-strips from the other three edges.
Now we are left with four quadrants, which are obviously tileable.
\end{proof}

\section{Tiling indented quadrants with rectangles}\label{s:quadrant}
Lemma~\ref{l:reduce} lists several classes of \problem{Tileability} problems that are equivalent.
Tiling with rectangles is equivalent with the others for finite regions~\cite{PY-rect},
but Theorem~\ref{t:alg} puts an end to the hope of extending the equivalence to cofinite regions.
Here we present an equivalence for special infinite regions, and use it to exhibit yet another undecidable result.
We combine an explicit construction in~\cite{PY-rect} with ideas already appearing in this paper.
Thus to save space and avoid redundancy, we only sketch the proofs; details will appear in~\cite{Yang}.

\begin{lem}
Given a (Wang) tileset $\T$, it is undecidable whether the (white) fourth quadrant is $\T$-tileable.
\end{lem}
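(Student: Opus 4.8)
The plan is to reduce the halting problem to tileability of the fourth quadrant, using the quadrant's corner as a \emph{seed} in the same spirit as the region $\G_\x$ in the proof of Theorem~\ref{t:ctt}. Fix a Turing machine $\M$ over an alphabet $\Sigma\ni 0$ (blank) with a one-way infinite tape, cells indexed $1,2,3,\dots$, and start state $s$; whether $\M$ halts on the empty tape is undecidable, and the machine-to-tileset map built below will be computable. Identify the white fourth quadrant with $Q=\{(i,j)\in\Z^2 : i\ge 1,\ j\le 0\}$, so its boundary is the white horizontal ray along the tops of the squares $(i,0)$ together with the white vertical ray along the left sides of the squares $(1,j)$, meeting at the corner square $(1,0)$. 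In $Q$, row $0$ will carry the initial tape, each row $j=-m$ will carry the configuration of $\M$ after $m$ steps, and the corner square will encode tape cell $1$ at time $0$.

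First I would assemble the Wang tileset $\T_\M$. Start from the tileset $\MM$ emulating $\M$ furnished by Lemma~\ref{l:tm}, normalized so that: (i) adjacent rows read top to bottom correspond to successive configurations; (ii) no tile of $\MM$ bears a white edge, except for distinguished \emph{leftmost-column} tiles, which bear white on their left edge and correctly handle the leftmost tape cell (the head never moves left of cell $1$); (iii) in particular no tile of $\MM$ has a white top; and (iv) a configuration in a halting state admits no tile, so a halting computation cannot be continued downward. Then add two \emph{seed tiles}: a corner tile $\tf C$ with white top, white left, right color $\langle\infty\rangle$, and bottom color $(0,s)$; and a tile $\tf I$ with white top, left and right colors $\langle\infty\rangle$, and bottom color $0$. (To seed a nonempty input $x_1\cdots x_r$ instead, replace $\tf C,\tf I$ by the evident chain of $r+1$ tiles, exactly as the word tiles in \fig{pcp}.) The map $\M\mapsto\T_\M$ is plainly computable.

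The core claim is that $\T_\M$ tiles the white fourth quadrant if and only if $\M$ does not halt. If $\M$ does not halt, a tiling is exhibited directly: put $\tf C\,\tf I\,\tf I\,\cdots$ along row $0$, so that the bottom edges of row $0$ spell the initial configuration, and fill rows $-1,-2,\dots$ by $\MM$ following the infinite computation, with leftmost-column tiles in column $1$; all boundary edges are white by construction. Conversely, suppose $\T_\M$ tiles $Q$. The corner square $(1,0)$ must receive a tile with white top and white left; by (ii)--(iii) and the design of the seed tiles, only $\tf C$ qualifies. Its right color $\langle\infty\rangle$ together with the white top boundary then forces $\tf I$ at $(2,0)$, and inductively forces all of row $0$ to be $\tf C\,\tf I\,\tf I\,\cdots$; hence the bottom edges of row $0$ are exactly the initial configuration. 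By the faithfulness of the emulation (Lemma~\ref{l:tm}), rows $-1,-2,\dots$ are then forced to reproduce the deterministic computation of $\M$ step by step, and since the tiling covers \emph{every} row $j\le -1$, no halting configuration is ever reached, i.e. $\M$ does not halt. This gives undecidability for Wang tilesets. The polyomino version follows by applying Lemma~\ref{l:reduce} with the modification used in the proof of Corollary~\ref{cor:poly} (white edges become straight boundary, with no corner zig-zags), under which the white boundary of $Q$ becomes the ordinary straight boundary of the quadrant and tileability is preserved.

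The main obstacle is the rigidity near the corner, which is a matter of careful color bookkeeping rather than a new idea: one must arrange the colors on $\MM$ and on the seed tiles so that (a) no emulation tile can ever sit in a square with a white top (else row $0$ is not forced) and no tile but $\tf C$ has both a white top and a white left, and (b) the leftmost-column behavior of $\MM$ is compatible with a white left edge in \emph{every} row, including rows in which the head occupies cell $1$. Once this is pinned down, tilings of $Q$ correspond exactly to runs of $\M$ and the lemma follows; the remaining details, together with the passage to polyominoes and the subsequent reduction to rectangles, are deferred to~\cite{Yang}.
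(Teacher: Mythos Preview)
Your proposal is correct and follows essentially the same approach as the paper: reduce the halting problem on a one-way infinite blank tape by taking the emulation tileset $\MM$ from Lemma~\ref{l:tm} and adding seed tiles that, anchored at the corner of the quadrant, force the top row to encode the initial configuration, so that the quadrant is tileable if and only if $\M$ runs forever. The paper's proof is a brief sketch pointing to the construction in Theorem~\ref{t:ctt}; you have simply supplied the details (the corner tile~$\tf C$, the initializer~$\tf I$, and the white-edge bookkeeping) that the paper leaves implicit.
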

\begin{proof}
Take a Turing machine $\M$, and use Lemma~\ref{l:tm} to get an associated tileset~$\MM$.
Similar to the proof of Theorem~\ref{t:ctt},
one can easily make some modifications to obtain a tileset $\MM'$
with the property that the fourth quadrant is $\MM'$-tileable
if and only if the Turing machine has a non-halting computation when started on a 1-way infinite blank tape.
Since that is undecidable (with $\M$ varying as input), tileability of the quadrant is undecidable.
\end{proof}

As usual, this result holds for a tileset of polyominoes.
Obviously, it makes no sense to ask the same question for rectangular tiles.
However, if we replace the boundary of the quadrant by periodic rectilinear curves,
then the problem becomes undecidable again.
Call such a boundary a \emph{periodically indented} fourth quadrant.

\begin{thm}\label{t:quadrant}
Given a periodically indented fourth quadrant $\G$ and a tileset $\T$ of rectangles,
it is undecidable whether $\G$ admits a $\T$-tiling.
\end{thm}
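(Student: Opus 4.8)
The plan is to reduce \gct{[1\times1]} (tileability of the punctured plane $\Z^2\bs[1\times1]$ by an arbitrary tileset), which was shown undecidable above, to \problem{Tileability} of a periodically indented fourth quadrant by rectangles. There are two separate ``upgrades'' to perform, and the idea is to do them in sequence so that each one is a self-contained modification of a construction already in the paper.

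First I would replace the given (general polyomino) tileset by a rectangular one. Following the strategy of~\cite{PY-rect}, each polyomino tile can be simulated by a set of small rectangular ``puzzle'' tiles (unit squares and dominoes carrying a bumps-and-dents pattern along a coarse grid), so that any tiling by the rectangular gadgets is forced to assemble these gadgets into the original polyomino shapes in a fixed orientation. Concretely: scale everything by a large factor $N$, and replace each scaled tile by its decomposition into $O(1)$ rectangles whose forced interlocking reproduces the original tile; the boundary of the region-to-be-tiled likewise gets a rectilinear ``teeth'' pattern so that rectangles cannot cross it. This is exactly the explicit construction of~\cite{PY-rect}, combined with the border-tile trick used in Lemma~\ref{l:wang} and Corollary~\ref{cor:poly} of this paper to handle the outer boundary; the point I would stress is that the simulation is local, so it transforms a tiling of $\Z^2\bs[1\times1]$ into a tiling of the correspondingly indented plane and vice versa.

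Second I would convert the punctured \emph{plane} into an indented \emph{quadrant}. Here the observation is that a Turing-machine tileset $\MM'$ as in the proof of Theorem~\ref{t:ctt} / the preceding lemma already only needs a half-plane (one computation running downward), and the two sides of the computation strip are ``sealed off'' by the initialization tiles; and the trivial filler tile $\tf U$ tiles everything above. So in fact the relevant region is already a quadrant-like region once we cut along the seams. Alternatively, and more cleanly: take the undecidable quadrant problem from the lemma just proved (tileability of the white fourth quadrant by a Wang/polyomino tileset), apply the rectangle-simulation of step one, and note that the boundary teeth introduced by that simulation are exactly a periodic rectilinear curve — i.e., the simulated region is a periodically indented fourth quadrant. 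This avoids re-deriving the Turing-machine encoding from scratch.

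The main obstacle, and the place where care is genuinely needed, is the interaction between the two boundaries: along the edges of the quadrant the rectangular gadgets must be forced into the correct orientation \emph{and} must not be able to ``leak'' a gadget across the boundary to produce a spurious tiling. In the plane or half-plane there is enough room that one can always fall back on minimality arguments (smallest tiling, border frames cannot appear in the interior), but a quadrant has a genuine corner and two semi-infinite edges, so I would need periodic ``edge gadgets'' and a ``corner gadget'' that plug the boundary teeth and cannot be reused in the bulk — analogous to the border tiles of Figure~\ref{fig:border} but adapted to a one-cornered region. Verifying that these edge and corner gadgets force alignment, admit a tiling in the ``yes'' case, and block everything in the ``no'' case is the heart of the argument; since the underlying gadgetry is that of~\cite{PY-rect} together with the border-tile ideas already used here, I would only sketch it, with the promise (as the paper states) that full details appear in~\cite{Yang}.
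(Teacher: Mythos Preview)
Your ``alternative, cleaner'' route---start from the preceding lemma (undecidability of tiling the white fourth quadrant by a polyomino/Wang tileset) and then apply the rectangle simulation of~\cite{PY-rect}---is exactly the paper's argument, and it is the one you should commit to; the opening plan via \gct{[1\times1]} is a detour that only creates the extra headache of turning a punctured plane into a quadrant.

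Where your write-up diverges from the paper is in the last paragraph. You anticipate having to \emph{add} bespoke periodic edge gadgets and a corner gadget on top of the~\cite{PY-rect} construction. The paper does not do this: the reduction in~\cite{PY-rect} is already a function~$f$ that replaces each unit-length boundary edge by a fixed rectilinear curve, so feeding a single horizontal edge and a single vertical edge to~$f$ \emph{produces} the periodic indentation of the two rays automatically, with the corner arising as their join. No separate border tiles in the style of Figure~\ref{fig:border} are introduced. The genuine work, which the paper only sketches, is then to check that the bijective correspondence between $\T$-tilings and $\T'$-tilings established in~\cite{PY-rect} for finite regions survives for this particular infinite region; the paper flags that the presence of the corner is essential here (the same trick fails for a half-plane), and defers the details to~\cite{Yang}. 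So your instinct that the boundary interaction is the delicate point is right, but the mechanism is ``verify the existing local reduction still works at infinity'' rather than ``design new gadgets.''
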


\begin{proof}
Take a set of polyominoes~$\T$.
The reduction in~\cite{PY-rect} provides a set of rectangles~$\T'$, and a linear-time function $f$ that transforms a finite region $\G$ to another finite region~$\G'$,
such that $\G$ is $\T$-tileable if and only if $\G'$ is $\T'$-tileable.
Explicitly, $f$ takes each unit-length edge of $\G$ and replaces it with a rectilinear curve to get~$\G'$.
By feeding a single vertical and a single horizontal edge to~$f$,
we get two rectilinear curves that can be used to make the periodic boundary of an indented fourth quadrant.
By \emph{carefully} following the proof of the bijective correspondence of $\T$-tilings of regions and $\T'$-tilings of the transformed regions,
one can see that everything works for this specific infinite case as well.
\end{proof}

We should note that the quadrant having a corner is essential when applying the \emph{proof} in~\cite{PY-rect} to this context.
For example, this reduction does not work for the half-plane.
Indeed, for that case, the sketch of the lemma above does not work, but can be fixed with some additional ideas to prove the corresponding lemma.
The theorem, however, cannot be salvaged, as the algorithm of Theorem~\ref{t:alg} would apply with some minor alterations.

\section{Augmentability}\label{s:aug}

In the style of \problem{Rectangular Tileability},
where we are interested if there is \emph{some} finite (rectangular) region from an infinite collection that is tileable by a given tileset,
we consider the following notion.
A finite region $\G$ is \emph{augmentable} if there is a finite s.c.\ region $\G'\supset\G$
such that both $\G'\bs\G$ and $\G'$ are tileable.

\problemdef{Augmentability}
{A tileset~$\T$ and a region $\G$.}
{Is $\G$ augmentable by~$\T$?}

As above, we can fix either $\T$ or $\G$ in the input (obviously not simultaneously).
We have the following result:

\begin{thm}
\problem{Augmentability} is undecidable.
Moreover, it remains undecidable even when $\G$ is fixed.
Similarly, there exists some tileset~$\T$ such that \problem{Augmentability} with $\T$ fixed is undecidable.
\end{thm}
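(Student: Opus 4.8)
\emph{Setup.} Since fixing either input only restricts the set of instances, the first assertion is a consequence of each of the other two, so I would concentrate on the fixed-region and fixed-tileset versions. Note first that \problem{Augmentability} is semidecidable: if $\G$ is augmentable one certifies this by exhibiting the finite witness $\G'$ together with tilings of $\G'$ and of $\G'\setminus\G$. Hence it suffices to give many-one reductions into \problem{Augmentability} from two standard undecidable problems, \problem{PCP} for the fixed-region version and the halting problem for the fixed-tileset version. In both cases the governing intuition is that a region should be augmentable precisely when the encoded instance has a solution, and that the finite region $\G'$ realizes that solution geometrically; this is the ``finite'' mirror of \problem{Complementary Tileability}, where the cofinite region was tileable precisely when the encoded computation \emph{failed} to terminate.

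\emph{Fixed region, from \problem{PCP}.} The plan is to fix a single ``seed'' polyomino $\G$ and, given a \problem{PCP} instance $I$, to build a tileset $\T_I$ out of the tiles of Corollary~\ref{cor:poly} -- scaled so the interlocking corner zig-zags of Lemma~\ref{l:reduce} pin every tile to a common grid and orientation -- in such a way that $\G$ can be extended to a simply connected tileable region only by growing the rectangular \emph{border frame} of Lemma~\ref{l:wang} around it, with $\G$ occupying a suitably ``misaligned'' bite of one distinguished frame piece. If $I$ has a solution, the associated rectangle $R$ it tiles is the witness: a variant of the standard tiling tiles $\G'=R$, and the same tiling with the distinguished piece dissolved around the bite tiles $R\setminus\G$. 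Conversely, the teeth on the boundary of $\G$ force border tiles around it, and here the second requirement -- that $\G'$ itself be tileable -- does the real work: a border frame that fails to close up would leave $\G$ with no legal re-tiling in context, so tileability of $\G'$ forces the frame to close into an honest finite rectangle, whose interior is therefore a legal \problem{PCP} assembly, whence $I$ is solvable. (An arbitrary fixed $\G$ is then reached by a padding argument as in the proof sketch for \gct{\G}.)

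\emph{Fixed tileset, from the halting problem.} The same template applies. Fix a universal Turing machine $\M$, let $\MM$ be its emulation tileset from Lemma~\ref{l:tm}, adjoin the border tiles together with a small fixed family of ``cap'' tiles -- sealing the ends of the used tape and, crucially, capping a row from below only in the halting state -- and pass to polyominoes by Lemma~\ref{l:reduce}; this fixed tileset is $\T$. For a word $\x$ let $\G_\x$ be the polyomino form of the initial-configuration strip of \fig{start}. If $\M$ halts on $\x$ its space-time diagram is a finite region; putting $\G_\x$ on top, sealing the used tape on the sides, capping the halting row below, and wrapping everything in a border frame produces a finite simply connected $\G'\supset\G_\x$ with $\G'\setminus\G_\x$ tiled by construction, and $\G'$ is tileable because the $\G_\x$-slot may instead be filled by the emulation tiles encoding the initial configuration. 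Conversely, the teeth of $\G_\x$ force the run of $\M$ to be unrolled row by row below it, and -- exactly as before, using tileability of $\G'$ to forbid an unclosed truncation -- no finite simply connected $\G'$ can arise unless the run reaches the halting state; thus $\G_\x$ is augmentable iff $\M$ halts on $\x$.

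\emph{Main obstacle.} The conceptual core of both reductions is the easy ``if'' direction -- a solution or halting computation yields a finite, closed-up witness. The hard part, where I expect to spend the effort, is the rigidity bookkeeping in the ``only if'' direction: one must design $\G$ (resp.\ $\G_\x$) and the tileset so that simultaneously (a) $\G$ is \emph{not} by itself tileable, so $\G'=\G$ is no spurious witness; (b) the slot occupied by $\G$ can nevertheless be filled \emph{in context}, so a genuine closure is tileable; and (c) there is \emph{no} finite simply connected witness arising from a truncated or unclosed forced structure. Point (c) is the real danger -- a naive ``partial'' region would otherwise witness augmentability regardless of the instance -- and is defused only by the combined use of the border tiles, the grid- and orientation-locking of Lemma~\ref{l:reduce}, and both tileability conditions in the definition. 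These arguments are of the same flavour as the minimality arguments already carried out in Lemma~\ref{l:wang} and Corollary~\ref{cor:poly}, but require extra care here because the target region $\G'$ is entirely unconstrained.
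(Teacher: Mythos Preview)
Your high-level plan for the fixed-tileset version---reduce from halting, lay out the space--time diagram below the input strip, and wrap it in a border frame---matches the paper. But there is a genuine gap at the point you flag as the ``main obstacle,'' and your proposed resolution does not work.

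The difficulty is the \emph{second} tileability condition: $\G'$ itself must be tileable. You propose to fill the $\G_\x$-slot with ``the emulation tiles encoding the initial configuration,'' but those machine tiles carry $V$ on their vertical edges and tape symbols on top, whereas the slot sits in the upper-left corner of $\G'$ with outward-facing (``white'') boundary on its top and left. No machine or border tile has a white top \emph{and} a white left, so nothing in your tileset can occupy that corner, and $\G'$ is never tileable. Conversely, if you redesigned $\G_\x$ so that its slot \emph{could} be filled by tiles already present, then $\G_\x$ would itself be tileable and $\G'=\G_\x$ would be a spurious witness---exactly the tension between your points (a) and (b).

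The paper resolves this with an idea absent from your outline: a separate family of \emph{filler tiles} $\tsf F$ (nine Wang squares with their own private colour palette $L',T',H',V',R',B'$) that tile any white rectangle and are incompatible with every machine and border tile. This does double duty. In the ``if'' direction, once $\G'$ is built as a white rectangle, it is tileable by filler tiles alone. In the ``only if'' direction, the upper-left corner of any $\G'$ coincides with the upper-left corner of $\G_\x$, so in any tiling of $\G'$ that corner must be the filler tile $\tf{TL}'$; filler tiles touch only filler tiles, so \emph{all} of $\G'$ is filler-tiled, which forces $\G'$ to be a white rectangle. That global shape constraint is what then pins down the tiling of $\G'\setminus\G_\x$ as border-plus-machine and extracts a halting run. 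Your appeal to ``tileability of $\G'$ to forbid an unclosed truncation'' is pointing at the right lever, but without the filler mechanism there is nothing to pull.

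For the fixed-region version, the paper does not go through \problem{PCP} at all; it simply swaps roles, taking the single filler tile $\tf{TL}'$ as the fixed seed $\G$ and putting $\G_\x$ into the tileset. Your PCP route is not obviously wrong, but it inherits the same gap: you need some mechanism that makes $\G'$ tileable while simultaneously rigidifying its shape, and the border frame alone does not supply one.
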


\begin{proof}
We first prove undecidability in general, then briefly sketch how $\G$ or $\T$ may be fixed at the end of the proof.
Recall that it is undecidable whether a Turing machine with a $1$-way infinite tape halts.
By convention, the machine head starts at the left end of the tape, where the input is written.
Fix a Turing machine $\M$ on an alphabet $\Sigma$, with blank symbol $0\in\Sigma$, and starting state~$s$.
Let $\MM$ denote the associated tileset of \emph{machine tiles} afforded by Lemma~\ref{l:tm}.
Let $\B_\M$ be the tileset of \emph{border tiles} defined in \fig{aug}, consisting of
$\tf T$, $\tf{TR}$, $\L$, $\R$, $\tf{BL}$, and $\tf{BR}$,
$\L_x$, $\R_x$, and $\H_{x,q}$ for $x\in\Sigma$ and $q$ a halting state of~$\M$.
The tileset $\tsf F$ of \emph{filler tiles} consists of those in \fig{filler}.
Finally, replace unlabeled edges in $\T_\M=\MM\cup\B_\M\cup\tsf F$ with four new colors depending on which way each edge faces.
By abuse of language, we will continue to refer to these four colors on the boundary as white,
with the understanding that the white colors do not match,
and thus the border tiles can only be used on the boundary of a tiling and not in its interior.
This finishes the construction of the tileset~$\T=\T_\M$.

\newcommand{\inputtile}{
   \draw (-1,0) rectangle (5,1);
   \foreach \x in {0,1,...,4}
   {
      \draw (\x cm, 0 cm) -- (\x cm, 0.2 cm);
      \draw (\x cm, 1 cm) -- (\x cm, 0.8 cm);
   }
   \node at (2,0.5) {$\G_\x$};
}
\newcommand{\blanktile}{
   \draw (0,0) rectangle (1,2);
   \node at (0.5,0.5) {$\B$};
   \foreach \y in {1}
   {
      \draw (0 cm, \y cm) -- (0.2 cm, \y cm);
      \draw (1 cm, \y cm) -- (0.8 cm, \y cm);
   }
}
\newcommand{\stn}[3]{
   \stnn{#1}{#2}{\tf{#3}}
}
\newcommand{\stnn}[3]{
   \begin{scope}[xshift=#1 cm,yshift=#2 cm]
      \draw (0,0) rectangle (1,1);
      \node at (0.5,0.5) {$#3$};
   \end{scope}
}
\begin{figure}[hbtp]
   \begin{tikzpicture}
   \begin{scope}[xshift=-2cm,yshift=6cm]
   \inputtile

   \node[below] at (-0.5,0) {$L$};

   \node[below] at (0.5,0) {$(x_1,s)$};
   \foreach \x in {2,3,...,3}
      \node[below=4pt,xshift=\x cm] at (-0.5,0) {$x_\x$};
   \node[below=4pt] at (4.5,0) {$x_r$};


   \node[below=8pt] at (3.5,0) {$\ldots$};

   \node[right] at (5,0.5) {$T$};
   \end{scope}

   \tiletn{T}{}{0}{T}66{T}
   \tiletn{T}{}{R}{}96{TR}

   \tiletn{}{L}{L}{V}{-3}3{L}
   \tiletn{V}{R}{R}{}93{R}

   \tilen{L}{x}{}{L}00{\L_x}
   \tilen{L}{(x,q)}{}{R}30{\H_{x,q}}
   \tilen{R}{x}{}{R}60{\R_x}
   \tiletn{}{L}{}{L}{-3}0{BL}
   \tiletn{R}{R}{}{}90{BR}
   \end{tikzpicture}

   \caption{Border tiles $\B_\M$.}
   \label{fig:aug}
\end{figure}

\begin{figure}[hbtp]
   \begin{tikzpicture}
   \tiletn{}{}{L'}{T'}06{TL'}
   \tiletn{T'}{}{H'}{T'}36{T'}
   \tiletn{T'}{}{R'}{}66{TR'}
   \tiletn{}{L'}{L'}{V'}03{L'}
   \tiletn{V'}{H'}{H'}{V'}33{C'}
   \tiletn{V'}{R'}{R'}{}63{R'}
   \tiletn{}{L'}{}{B'}00{BL'}
   \tiletn{B'}{H'}{}{B'}30{B'}
   \tiletn{B'}{R'}{}{}60{BR'}
   \end{tikzpicture}
   \caption{Filler tiles $\tsf F$.}
   \label{fig:filler}
\end{figure}

Let $\x=x_1x_2\ldots x_r\in\Sigma^*$ be a word in~$\Sigma$.
Construct region $\G=\G_\x$ as shown in \fig{aug}, where $s$ is the starting state of~$\M$,
and the white colors are subject to the same aforementioned replacement treatment.
It remains to show that $\G_\x$ is augmentable by $\T_\M$ if and only if $\M$ halts on input~$\x$.

\begin{figure}[hbtp]
   \begin{tikzpicture}
   \inputtile
   \foreach \x in {5,6,7}
   {
      \stn{\x}0{T}
   }
   \stn80{TR}

   \draw (0,0) rectangle (8,-4);
   \node at (4,-2) {space-time diagram of a halting computation};
   \node[above] at (3.5,-4) {$(x,q)$};

   \foreach \y in {-1,-2,...,-4}
   {
      \stn{-1}{\y}{L}
      \stn{8}{\y}{R}
   }

   \stnn{3}{-5}{\H_{x,q}}
   \foreach \x in {0,1,2}
   {
      \stnn{\x}{-5}{\L_\bullet}
   }
   \foreach \x in {4,5,6,7}
   {
      \stnn{\x}{-5}{\R_\bullet}
   }
   \stn{-1}{-5}{BL}
   \stn8{-5}{BR}

   \end{tikzpicture}
   \caption{Tiling of $\G'\bs\G$.}
   \label{fig:compute}
\end{figure}

First, suppose $M$ halts on input~$\x$.
Take a tiling representing the space-time diagram of a halting computation.
By definition, the top boundary color sequence of the tiling is~$\x$, possibly with blank symbol $0$ repeated on its right.
We may therefore place $\G$ above the space-time diagram, with tiles $\tf T$ to its right (see \fig{compute}).
Now we surround the region by the other border tiles to get a white rectangular region~$\G'$.
Of course, the bottom is tiled by~$\H_{x,q}$,
which matches the state symbol in the last row of the space-time diagram,
and with $\L_\bullet$ and $\R_\bullet$ tiles on the sides, where $\bullet$ denotes some unspecified symbol from~$\Sigma$.
We thus get a tiling of $\G'\bs\G$ automatically.
Notice that since $\G'$ is a white rectangle, it is tileable by the filler tiles $\tsf F\subset\T_\M$, as desired.

Conversely, suppose $\G_\x$ is augmentable by~$\T_\M$.
Since no tiles could be immediately above or to the left of~$\G$,
$\G'$ matches the upper left corner of~$\G$.
Thus in any tiling of $\G'$, the space currently occupied by $\G$ must be tiled by filler tiles.
Note that as the filler tiles can only be adjacent to other filler tiles,
and since $\G'$ is (simply) connected,
$\G'$ is tileable by filler tiles alone.
Of course, tilings of $\G'\bs\G$ must not use filler tiles, as they cannot be adjacent to~$\G$.
This means the boundary of $\G'$ must be colors common to both filler and non-filler tiles,
meaning that $\G'$ is a white rectangle.
The only way to tile $\G'\bs\G$ is by using the border tiles as shown in \fig{compute},
with precisely one $\H_{x,q}$ somewhere.
Since the interior cannot utilize border tiles, it must be tiled by the machine tiles~$\MM$ alone.
Thus it emulates a computation starting from $\x$, potentially followed by finitely many blank symbols,
and ending at a halting state (forced by the presence of a border tile $\H_{x,q}$ on the bottom boundary) after finitely many steps, as desired.

\medskip

We may interchange the role of $\G_\x$ and~$\tf{TL}'$,
adding $\G_\x$ to the tileset and removing $\tf{TL}'$ to be used as the input region.
This shows that \problem{Augmentability} remains undecidable even when $\G$ is fixed.
On the other hand, by fixing $\M$ as some Universal Turing machine,
we see that there is some fixed $\T$ such that \problem{Augmentability} is undecidable.
\end{proof}


\medskip

\section{Final remarks and open problems}\label{s:fin}

\subsection{} \label{ss:complexity}
Tileability of a given finite region is decidable simply by exhaustive search.
In this context,
tileability of finite regions by a finite tileset is \NP-complete~\cite{Lew}.%
\footnote{This was also indicated in an unpublished manuscript by Garey, Johnson, and Papadimitriou; see~\cite{GJ,Pap}.}
This is true even when the tileset is fixed as a horizontal bar $h_n$ of $n$ squares and a vertical bar $v_m$ of $m$ squares,
$n\geq2$ and $m\geq3$~\cite{BNRR}.%
\footnote{Note that everything is tileable if $n=1$; if $n=m=2$, it is a matching problem and thus is solvable in polynomial time (see \latin{e.g.}~\cite{LoP}).}
However, when the region is simply connected (s.c.),
tileability by $\{h_n,v_m\}$ can be determined in linear time~\cite{KK}.%
\footnote{Whenever the tileset is fixed, the complexity of the running time is given in terms of the area of the input region.}
By comparing these results, it is apparent that simple connectivity makes a difference.
This is in part due to Thurston's height function approach~\cite{Thu} and techniques in combinatorial group theory developed by Conway and Lagarias~\cite{CL}.
Using these methods, R\'emila showed that tileability of s.c.\ regions is quadratic for any two fixed rectangles~\cite{Rem}.
Perhaps surprisingly, there is a finite set of rectangular tiles such that tileability of s.c.~regions is \NP-complete~\cite{PY-rect}.
On the other hand, tileability of a rectangular region for a fixed set of tiles can be determined in linear time (see below).
Of course, if the rectangular region is unspecified, undecidability returns.
In a related direction, it is undecidable whether a tileset is a \emph{code}~\cite{BN-code},
\latin{i.e.}, every tileable finite region is uniquely tileable.

\subsection{} \label{ss:curious}
Theorem~\ref{t:rect} leads to some curious consequences.
\begin{thm}[\cite{DK}]
Given a tileset $\T$, there is a tileset $\T'$ consisting only of rectangular tiles such that $\T$ and $\T'$ tile the same rectangular regions.
\end{thm}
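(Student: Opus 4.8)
The plan is to graft the rectangular‑tile machinery of~\cite{PY-rect} onto the \emph{border tile} device used in Lemma~\ref{l:wang} and Corollary~\ref{cor:poly}. Starting from the polyomino tileset~$\T$, apply the reduction of~\cite{PY-rect} (the same one invoked in the proof of Theorem~\ref{t:quadrant}) to obtain a tileset~$\T_0$ of rectangles together with the edge‑replacement map~$f$, so that a finite region~$\G$ is $\T$‑tileable if and only if $f(\G)$ is $\T_0$‑tileable, with $\T$‑tilings of~$\G$ corresponding bijectively to $\T_0$‑tilings of~$f(\G)$. Recall that $f$ replaces each unit boundary edge of~$\G$ by a fixed rectilinear tooth‑curve (possibly after a fixed rescaling by some $k=k(\T)$); hence for a rectangle $P=[n\times m]$ the region $f(P)$ lies in its bounding box $\widehat P$, which is again a rectangle, and the \emph{margin} $\widehat P\sm f(P)$ is a disjoint union of four toothed boundary strips — each a periodic run of identical notches — together with four corner blocks, all of shape depending only on~$\T$.

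I would then adjoin a finite family~$\cB$ of rectangular \emph{border tiles}, setting $\T':=\T_0\cup\cB$, designed to perform two jobs. First, $\cB$ should tile every margin $\widehat P\sm f(P)$ in one prescribed way — a finite hand‑construction: a fixed small packing of a single notch, repeated along each strip, plus a fixed packing of each corner block — so that, combined with a $\T_0$‑tiling of~$f(P)$, it produces a $\T'$‑tiling of the genuine rectangle~$\widehat P$. Second, and here is the delicate point, since rectangles carry no interlocking corners the rigidity must be forced through \emph{dimensions}: one sizes the tiles of~$\cB$ (for instance by taking the rescaling~$k$ large and coprime to every side length occurring in~$\T_0$, and giving each border tile a side equal to the tooth scale) so that no border tile can be placed inside a region of the form~$f(P)$ and no sub‑packing of $\T_0\cup\cB$ can substitute for a border tile inside the toothed interior. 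Granting this, a minimality argument in the spirit of Lemma~\ref{l:wang} goes through: in a minimal $\T'$‑tiling of a rectangle the border tiles occur only in maximal ``crenellated frames''; nesting is ruled out by minimality, so there is exactly one frame, its outer boundary is the boundary of the rectangle, and its inner boundary encloses a region containing no border tiles, hence tiled by~$\T_0$ alone, hence of the form~$f(P)$; reading off the induced $\T$‑tiling of~$P$ completes the reverse direction.

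Combining the two directions, $\T'$ tiles exactly the rectangles $\{\widehat P:\T\text{ tiles }P\}$; since $P\mapsto\widehat P$ is a fixed affine bijection of rectangles, this is the same family ``up to that change of coordinates,'' and one can make the statement literal by choosing the tooth‑curve with every tooth an indentation, so that $f(P)\subseteq P$, $\widehat P=P$, and the margin is a collection of notches along the inside of~$\partial P$. I expect the main obstacle to be exactly the rigidity needed in the reverse direction: with no interlocking zig‑zags available the only lever is tile size, so the heart of the argument is the numerology forcing every rectangle tiled by~$\T'$ to come from a single crenellated frame, and for that one must work with the explicit~\cite{PY-rect} construction rather than as a black box — in particular one must check, or arrange by a cosmetic modification of those tiles, that the only simply connected regions $\T_0$ can tile are the crenellated ones~$f(\G)$, which are never rectangles, so that no spurious $\T'$‑tiling of a rectangle can bypass the border tiles.
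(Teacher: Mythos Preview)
The paper does not prove this theorem; it is quoted from de~Bruijn and Klarner~\cite{DK}, and the very next paragraph uses Theorem~\ref{t:rect} to show that \emph{no constructive proof can exist}: since $\T$ tiles some rectangle if and only if $\T'$ is nonempty, any computable map $\T\mapsto\T'$ would decide \problem{Rectangular Tileability}. Your proposal is exactly such a computable construction, so it is doomed from the outset.

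The failure is concrete and does not hide in the numerology. Your $\T'=\T_0\cup\cB$ is always nonempty, and every tile $r\in\T'$ is itself a rectangle; a single copy of~$r$ is already a $\T'$-tiling of the rectangle~$r$. Hence your $\T'$ tiles at least the rectangles $\{r:r\in\T'\}$ regardless of~$\T$, so when $\T$ tiles no rectangle at all the two tilesets do \emph{not} tile the same rectangular regions. The minimality/frame argument cannot rescue this: a lone border tile is a perfectly valid minimal $\T'$-tiling of a rectangle, with no crenellated frame and no $\T_0$-region from which to read off a $\T$-tiling. More generally, any stack or brick pattern of your border rectangles tiles many rectangles that need have nothing to do with~$\T$. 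This is precisely why the de~Bruijn--Klarner argument must be existential: the correct $\T'$ is empty exactly when $\T$ tiles no rectangle, and that is something no algorithm can detect.
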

Note that $\T$ can tile \emph{some} rectangle if and only if $\T'$ is non-empty.
As such, in light of our main theorem, it is undecidable if $\T'$ is non-empty and,
\latin{a fortiori}, $\T'$ is not computable from~$\T$.
Indeed, while the proof in~\cite{DK} seems purely existential, now there is proof that it \emph{cannot} be made constructive.

\begin{thm}[\cite{LMP}]\label{t:lmp}
Tileability of an $[n\times m]$ rectangle by a fixed tileset of rectangles can be determined in time $O(\log n+\log m)$.
\end{thm}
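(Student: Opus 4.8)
The plan is to show that the set
\[
S_\T \;=\; \{(n,m)\in\Z_{\ge 1}^2 : [n\times m]\text{ is }\T\text{-tileable}\}
\]
is a \emph{semilinear} subset of $\N^2$ whose describing parameters (thresholds and periods) are bounded by a constant depending only on the fixed tileset~$\T$. Granting this, the algorithm is immediate: the bit length of the input $(n,m)$ is $\Theta(\log n+\log m)$, so one computes in $O(\log n+\log m)$ time the residues of $n$ and $m$ modulo the relevant constant periods, compares $n$ and $m$ against the relevant constant thresholds, and reads off the answer from a finite lookup table fixed in advance. Thus the entire content is the structural claim about~$S_\T$.

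To prove that claim I would split $\N^2$ into a bounded ``frame'' and a ``bulk''. Let $C$ be the largest side length occurring among tiles of~$\T$, and fix a threshold $N_0\ge C$. For the \emph{frame} $\{n\le N_0\}\cup\{m\le N_0\}$, consider a fixed value $m=m_0\le N_0$ and scan a putative tiling of $[n\times m_0]$ one column at a time. The relevant state at a vertical cut line is its \emph{cut profile}: for each of the $m_0$ rows, which tile of~$\T$ occupies that row at the line and how far into it we are (an offset in $\{0,\dots,C-1\}$, forced since widths are $\le C$). There are only finitely many cut profiles, so tilings of $[n\times m_0]$ are exactly the length-$n$ accepting walks of a finite automaton (from the empty profile to the empty profile), and hence $\{n:(n,m_0)\in S_\T\}$ is a unary regular language, so ultimately periodic. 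Ranging over the finitely many $m_0\le N_0$, and symmetrically over $n_0\le N_0$, shows that $S_\T$ intersected with the frame is semilinear with constant parameters.

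The bulk $\{n,m>N_0\}$ is the heart of the matter: here I need tileability of $[n\times m]$ to depend only on $n\bmod P$ and $m\bmod P$ for a \emph{single} constant period~$P$. The transfer-matrix argument above does give periodicity in $n$ for each fixed $m$, but with a period that is exponential in~$m$, which is useless; one must exploit genuine two-dimensionality. I would establish a structural dichotomy of Klarner--G\"obel type: there is a finite set of \emph{prime} rectangles (all of side length bounded in terms of~$\T$) such that every $\T$-tileable rectangle is obtained from the primes by repeatedly juxtaposing two already-tileable rectangles of equal height (side by side) or of equal width (one on top of the other). From this, the set of $\T$-tileable widths of a given height forms a numerical semigroup with bounded generators, coupled across heights by a mutual recursion; numerical semigroups contain every sufficiently large multiple of their gcd, and chasing this through the finitely many interacting semigroups yields the required uniform bi-periodicity of $S_\T$ in the bulk, with period $P$ and threshold bounded by constants. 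Splicing the frame and bulk descriptions together gives the semilinear form of~$S_\T$.

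\textbf{Main obstacle.} The delicate step is this bulk regime: proving (or importing) the prime-rectangle dichotomy with \emph{bounded} primes, and then verifying that the induced family of coupled numerical semigroups really collapses to a single constant period in both coordinates. A cleaner alternative worth trying is a direct two-sided pumping lemma --- locate two vertical cut lines of a wide tiling sharing the same cut profile and duplicate or delete the slab between them --- but forcing that slab width to realize every value in a fixed residue class (rather than some uncontrolled value) again requires the same semigroup analysis. Everything downstream, namely converting semilinearity with bounded parameters into an $O(\log n+\log m)$ algorithm, is routine arithmetic on binary representations.
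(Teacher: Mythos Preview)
The paper does not prove this statement: Theorem~\ref{t:lmp} is quoted from~\cite{LMP} and used as a black box in Subsection~\ref{ss:curious}, so there is no ``paper's own proof'' to compare against. What the paper \emph{does} do in that subsection is pair this theorem with the de~Bruijn--Klarner finite basis theorem~\cite{DK} to deduce the general Theorem~\ref{t:curious}, and then observe that the basis is not computable from~$\T$.

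Your high-level scheme---show $S_\T$ is semilinear with parameters bounded by a constant depending only on~$\T$, then do residue arithmetic on the binary input---is exactly the right shape, and your frame argument via cut profiles is correct and standard. Your identification of the ``bulk'' as the real issue is also correct. The structural ingredient you need there is precisely the de~Bruijn--Klarner result~\cite{DK}: for a fixed set of rectangular tiles there is a \emph{finite} set of prime rectangles generating, under juxtaposition, all tileable rectangles. You should cite this rather than try to reprove it; it is not a lemma one produces in passing, and the paper itself emphasizes that its proof is nonconstructive (indeed, must be, by Theorem~\ref{t:rect}). Once you have finitely many primes, the set of tileable $(n,m)$ is generated from a finite seed by the operations $(n,m),(n',m)\mapsto(n+n',m)$ and $(n,m),(n,m')\mapsto(n,m+m')$, and extracting a uniform bi-period from this is a genuine but finite combinatorial exercise, as you say.

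One caution about your alternative ``two-sided pumping'' route: repeating a slab between equal cut profiles only gives you $(n,m)\in S_\T\Rightarrow(n+kd_m,m)\in S_\T$ for some $d_m$ depending on~$m$, and controlling $d_m$ uniformly in~$m$ is exactly the hard part again, so this does not bypass the finite-basis input.
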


Combining the two results, we see that there is a linear time algorithm for testing tileability of rectangles for any fixed tileset.
Again, the main theorem proves that
the linear time algorithm afforded by combining~\cite{DK} with~\cite{LMP} cannot be algorithmically determined if the tileset is not fixed in advance.
In the next subsection, we outline another consequence regarding the growth of certain functions.

\subsection{}
For a polyomino $\P$ that tiles some rectangle,
Klarner defined the \emph{order} $h(\P)$ of $\P$ as the minimum number of copies it takes to tile some rectangle~\cite{Kla},
and conjectured that there is no polyomino of order $3$, which has since been confirmed~\cite{SW}.
There are polyominoes of order $4s$ for each $s\in\N$~\cite{Gol-order},
but no non-rectangular polyomino whose order is odd%
\footnote{One should not confuse this with the notion of \emph{odd order}, also introduced in~\cite{Kla}.}
is known, and their existence is an open question (see \latin{e.g.}~\cite[\S15]{GO}).

For convenience, let us set $h(\P)=0$ if $\P$ does not tile any rectangle,
and let $$f(n)=\max\{h(\P):\abs{\P}=n\},$$ where $\abs{\P}$ is the area of (the shape of)~$\P$.
If it is computable or bounded above by a computable function,
then \problem{Rectangular Tileability} for a single polyomino would be decidable.
Indeed, if there exists a computable function $g(n)$ such that $f(n)\leq g(n)$,
then given $\P$, simply try tiling all rectangles with areas up to $g(\abs{\P})$.
This is a finite process that terminates.
If no tilings are found, then $\P$ does not tile any rectangle.
There are several claims that the problem is decidable (thus the function $f(n)$ is computable),
but to our best knowledge (see \latin{e.g.}~\cite{AS}), no proofs are currently known.

\newcommand{\h}{\widetilde h}
\newcommand{\f}{\widetilde f}
Along the same vein, let $\h(\T)$ denote the minimum area of a rectangle tileable by a tileset~$\T$,
and set $\h(\T)=0$ if $\T$ does not tile any rectangle.
Note that when $\T=\{\P\}$ is a single polyomino, we have $\h(\{\P\})=h(\P)/\abs{\P}$.
Similarly, define $$\f(n)=\max\{f(\T):\text{$\T$ has \emph{total} area $n$}\}.$$

\begin{cor}
The function $\f(n)$ grows faster than \emph{any} computable function, \latin{e.g.}, the fast-growing Ackermann function.
\end{cor}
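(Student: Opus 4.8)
The plan is to derive this from Theorem~\ref{t:rect} (and its proof) by a standard ``computable function would decide an undecidable problem'' argument. Suppose, for contradiction, that there were a computable function $g$ with $\f(n)\le g(n)$ for all $n$. Then \problem{Rectangular Tileability} would be decidable: given a tileset $\T$ of total area $n$, compute $g(n)$ and exhaustively check every rectangle of area at most $g(n)$ for a $\T$-tiling (for each such area, there are only finitely many rectangles, and for each rectangle only finitely many placements of tiles, so this is a terminating search). If any of these rectangles is tileable we answer ``yes'', otherwise $\h(\T)>g(n)\ge\f(n)$, which is impossible unless $\h(\T)=0$, so we answer ``no''. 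This contradicts Theorem~\ref{t:rect}. Hence no computable $g$ dominates $\f$; in particular $\f$ outgrows the Ackermann function and every other computable function.

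The one point that needs a little care is that ``grows faster than any computable function'' should be read in the eventual-domination sense, i.e.\ there is no computable $g$ with $\f(n)\le g(n)$ for all sufficiently large $n$. The argument above still applies: if such a $g$ existed, modify it on the finitely many small values to get a computable $g'$ with $\f\le g'$ everywhere, and run the same decision procedure. So the statement is robust to this reading. It is also worth noting explicitly why the search terminates and is effective: the reduction in the proof of Lemma~\ref{l:wang} and Corollary~\ref{cor:poly} is computable, so from a \problem{PCP} instance we obtain an actual list of tiles, and ``is the $[a\times b]$ rectangle $\T$-tileable'' is decidable for each fixed $a,b$ by brute force.

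I do not expect any genuine obstacle here; the corollary is a soft consequence of undecidability plus the observation that a computable bound on the minimal tiling size converts the search problem into a decidable one. The only thing to be mindful of in writing it up is to keep the logic of the contrapositive straight and to phrase the domination correctly, as above. A clean one-paragraph proof suffices: assume a computable dominating function, build the decision procedure for \problem{Rectangular Tileability}, cite Theorem~\ref{t:rect} for the contradiction, and conclude.
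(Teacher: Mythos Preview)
Your argument is correct and is exactly the standard unpacking of the paper's one-line proof, which simply says the corollary follows from the undecidability of \problem{Rectangular Tileability} (Theorem~\ref{t:rect}). The aside about the computability of the reduction from \problem{PCP} is unnecessary, since the decision procedure you build takes an arbitrary tileset as input, but this does not affect correctness.
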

\begin{proof}
This follows from the undecidability of \problem{Rectangular Tileability} (Theorem~\ref{t:rect}).
\end{proof}

\subsection{} \label{ss:wang}
Wang conjectured that there is no aperiodic tileset of Wang squares~\cite{Wang}.
If this conjecture were true, \problem{Tileability} for Wang squares would be decidable.
Indeed, simply enumerate \emph{all} tilings of $[N\times N]$ squares, $N=1,2,\ldots$.
Either some square admits no tiling (and thus the plane is not tileable),
or at some point a finite fundamental domain will be found that can be used to tile the whole plane periodically.
Berger disproved the conjecture by showing the undecidability of this tileability problem~\cite{Ber}.
As a consequence, there is an aperiodic tileset of Wang squares.
After a series of reductions in the number of Wang squares,
the current record is $13$~\cite{Cul}.

\subsection{}
Reducing the number of tiles in an aperiodic tileset has been the subject of much effort.
The famous Penrose tilings derive aperiodicity from a pair of tiles~\cite{Pen}.
Here a tile is a geometric shape that can be rotated (and reflected) in addition to translation.
These tilings inherit aperiodicity from quasicrystals and thus do not admit transformations onto lattices.
Socolar and Taylor constructed a single tile on the hexagonal lattice that exhibits aperiodic behavior when external matching rules are enforced~\cite{ST}.
Since the matching rules span non-adjacent tiles,
encoding this as a geometric tile (with no external matching rules)
necessarily resulted in a tile whose interior is disconnected.
There seems to be no obvious way to circumvent this in the plane.

Returning to the square lattice,
it is unknown whether there is a single polyomino that could force aperiodic behavior.
Decidability of whether a polyomino admits a tiling (or a periodic tiling) of the plane is open.
However, there is a polynomial-time algorithm for deciding whether a given polyomino admits an \emph{isohedral tiling}~\cite{KV},
\latin{i.e.}, a tiling where the group of isometries acts transitively on the tiles thereof.
If only translations are allowed on a single polyomino,
\problem{Tileability} of the plane is decidable~\cite{BN-single}, and an algorithm with running time quadratic in the boundary length is known~\cite{GV}.

\subsection{}
Let us consider tilings of finite regions.
Traditionally, the use of \emph{coloring maps} is a main tool for proving non-tileability (see~\cite{Gol-book}).
It actually proves the non-existence of \emph{signed tilings},
\latin{i.e.}, a finite collection of tiles with $\pm1$ weights
such that the weights of tiles covering each square sum to $1$ inside the region and $0$ outside (see~\cite{CL}).
Conversely, when there are no signed tilings, there is a coloring argument that certifies it (see~\cite{Pak}).
This is not true for ordinary tilings.
Indeed, there are non-tileable regions that admit signed tilings,
thus proving non-tileability using this method is impossible for those regions.

The notion of augmentability considered in Section~\ref{s:aug} can be formulated in this language of signed tilings,
by requiring tiles with equal weights to be pairwise disjoint.
Augmentability is therefore a (proper) intermediate notion between tileability and signed tileability.
That is, tileable regions are augmentable, and augmentable regions are signed tileable.
Moreover, the implications are not bidirectional~\cite{Korn}.

\subsection{}
There are instances where results of tilings in the plane do not extend to higher dimensions.
For example, the number of tilings of a finite region by dominoes can be counted efficiently in the plane,
but is \SP-complete in higher dimensions~\cite{PY-domino}.
However, all of the results in this paper extend easily to higher dimensions.
Indeed, \problem{Rectangular Tileability} extends by simply endowing each $2D$ tile with height $1$ in the remaining directions.
The results involving Turing machines can also be easily extended in the obvious manner.
Extending \problem{Complementary Tileability} with rectangles to higher dimensions is only slightly trickier;
the details will appear in~\cite{Yang}.

Another direction of generalization is to consider other infinite regions with reasonable finite descriptions.
For example, consider a periodic perforated plane,
where there are holes occurring throughout the region in some periodic manner.
This may lead to interesting results like Theorem~\ref{t:quadrant}.

One could also ask for the running-time complexity of the algorithm described in the proof of Theorem~\ref{t:alg}.
In particular, what happens if the tileset is the (horizontal and vertical) dominoes?

\vskip.7cm


\noindent
\textbf{Acknowledgements.} \,
I am grateful to my advisor Igor Pak for proposing these problems,
helpful conversations,
reading this paper, and providing invaluable feedback.
The work is supported by the NSF under Grant No.~DGE-0707424.

\bigskip
\bigskip

\bigskip
\bigskip
\appendix
\section{Geometric encoding of Wang tiles}\label{s:reduce}
Let $\cT$ be a collection of tiles and $\cR$ be a collection of regions.
A decision problem in \emph{$(\cT,\cR)$-\problem{Tileability}} consists
of a \emph{fixed} tileset $\T\subset\cT$,
receives some $\G\in\cR$ as input, and outputs whether $\G$ is $\T$-tileable.
We say $(\cT,\cR)$-\problem{Tileability} is \emph{linear time reducible} to $(\cT',\cR')$-\problem{Tileability}
if for any finite tileset $\T\subset\cT$, there exists a finite tileset $\T'\subset\cT'$
and a \emph{reduction map} $f:\cR\to\cR'$ that is computable in linear time (in the complexity of $\G\in\cR$),
such that $\G\in\cR$ is $\T$-tileable if and only if $f(\G)$ is $\T'$-tileable.
If, moreover, that $(\cT',\cR')$-\problem{Tileability} is linear time reducible to
$(\cT,\cR)$-\problem{Tileability}, then they are \emph{linear time equivalent}.
Note that the transformation of the tilesets need not be efficient nor bijective.
To simplify the notation, we drop the prefix in $(\cT,\cR)$-\problem{Tileability} when the sets~$\cT$ and~$\cR$ are understood.

\begin{lem}[Tileability Equivalence Lemma]\label{l:reduce}
For finite regions, the following classes of \problem{Tileability} problems are linear time equivalent:
\begin{enumerate}
\item \problem{Tileability} with a fixed set of rectangular tiles.
\item \problem{Tileability} with a fixed set of polyomino tiles.
\item \problem{Tileability} with a fixed set of generalized Wang tiles.
\item \problem{Tileability} with a fixed set of Wang squares.
\end{enumerate}
For cofinite regions, the last three are linear time equivalent.
\end{lem}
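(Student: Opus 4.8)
The plan is to run a short system of linear-time reductions among the four classes, with essentially all the work in one step. Three reductions are immediate, each with the identity map on regions: a rectangle is a polyomino, so (i) reduces to (ii); a polyomino is a generalized Wang tile all of whose edges carry one fixed colour, whose matching condition is then vacuous, so a polyomino tiling of~$\G$ literally \emph{is} the corresponding Wang tiling and (ii) reduces to (iii); and a Wang square is a generalized Wang tile, so (iv) reduces to (iii). It remains to supply reductions (iii)$\to$(iv), (iii)$\to$(ii), and (ii)$\to$(i); the last is exactly the construction of~\cite{PY-rect} invoked in the proof of Theorem~\ref{t:quadrant}, which turns a polyomino tileset~$\T$ into a rectangle tileset~$\T'$ and a linear-time map~$f$ that replaces each boundary edge of~$\G$ by a bounded rectilinear curve. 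These six reductions together give (i)$\equiv$(ii)$\equiv$(iii)$\equiv$(iv).

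For (iii)$\to$(iv) I would refine Wang tiles into Wang squares: given $W\in\T$, whose shape is a (simply) connected polyomino, I would cut~$W$ along all its interior unit edges into unit cells, retain the original colours on cell-edges lying on~$\partial W$, and put on each interior edge a brand-new colour occurring nowhere else in the tileset. Let~$\T'$ be the resulting collection of Wang squares and take $f=\id$. Refining a $\T$-tiling of~$\G$ yields a $\T'$-tiling; conversely, in any $\T'$-tiling each fresh colour forces the two cells it separated into their original relative position, and since the cell-adjacency graph of~$W$ is connected this pins a full translated copy of~$W$ around every piece, so the $\T'$-tiling reassembles into a $\T$-tiling.

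For (iii)$\to$(ii), the geometric encoding, I would assign to each colour~$c$ used by~$\T$ a distinctive rectilinear tooth profile~$\sigma_c$ and its complement~$\ov{\sigma_c}$, all within a common blow-up scale~$N$, chosen so that $\sigma_c$ interlocks with~$\ov{\sigma_c}$ but with no~$\sigma_{c'}$ or~$\ov{\sigma_{c'}}$ for $c'\neq c$ and with no straight edge, and I would give every corner an asymmetric ``key'' zig-zag identical across all tiles. I would replace each $W\in\T$ by the polyomino~$\widehat W$ obtained by blowing~$W$ up by~$N$, carving into each boundary edge coloured~$c$ either~$\sigma_c$ or~$\ov{\sigma_c}$ according to a fixed parity convention on unit edges of~$\Z^2$ (so that the two tiles meeting along an interior edge carve complementary profiles there precisely when their colours agree), and inserting the corner keys. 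I would define~$f(\G)$ by blowing~$\G$ up by~$N$ and replacing each of its boundary edges by a deep rectilinear notch that leaves room for any profile to protrude, and adjoin to~$\T'$ a finite family of \emph{filler} polyominoes---carrying a fresh feature able to meet only such a notch---to absorb the slack between a tile's teeth and the notch. A $\T$-tiling of~$\G$ then yields a polyomino tiling of~$f(\G)$ by hats and fillers; conversely, in any polyomino tiling of~$f(\G)$ the corner keys force every tile onto the blown-up grid, the interlocking teeth along interior edges force the Wang colours of abutting tiles to agree, and the notched boundary together with the special feature confine the fillers to~$\partial f(\G)$ (consistent with a Wang tiling placing no constraint on boundary edges), so the tiling descends to a $\T$-tiling of~$\G$.

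Finally, every reduction above is either the identity or blows~$\G$ up by a fixed factor and replaces each of its $O(\abs{\partial\G})$ boundary edges by a curve of bounded length, hence runs in linear time in the complexity of~$\G$---equivalently, of its finite complement in the cofinite case. The reductions (iii)$\to$(ii), (iii)$\to$(iv), and the trivial inclusions are entirely local, so they survive verbatim when $\G$ is a cofinite region, which gives the mutual equivalence of (ii), (iii), (iv) there; only (ii)$\to$(i) drops out, and necessarily so, since otherwise Theorem~\ref{t:alg} would decide an undecidable problem. I expect the one real obstacle to be the correctness of the geometric encoding: the profiles~$\sigma_c$, the corner keys, and the boundary notches and fillers must be designed precisely enough that \emph{no} polyomino tiling of~$f(\G)$ can cheat---by seating a blown-up tile off the grid, by interlocking mismatched teeth, or by smuggling a filler into the interior---and ruling out such spurious tilings, a phenomenon already handled in ad hoc form in the proofs of Corollaries~\ref{cor:poly} and~\ref{cor:similar}, is the combinatorial heart of the argument.
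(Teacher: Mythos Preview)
Your proposal is correct and follows the same route as the paper: the trivial inclusions, cutting generalized Wang tiles into unit squares with fresh interior colours for (iii)$\to$(iv), a Golomb-style geometric encoding of Wang tiles as polyominoes via corner zig-zags and tooth profiles, and the citation of~\cite{PY-rect} for the passage to rectangles. The paper reduces (iv)$\to$(ii) directly rather than your (iii)$\to$(ii), but since (iii)$\leftrightarrow$(iv) is cheap this is immaterial.

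The one substantive difference is that you are more explicit than the paper about how the Wang-to-polyomino reduction handles finite and cofinite regions. The paper presents the encoding for the full plane (citing~\cite{Gol-reduce}) and then simply asserts that ``this construction clearly works for tiling finite or cofinite regions instead of the plane as well,'' without spelling out the region map~$f$ or saying how the unconstrained boundary colours of a Wang tiling are absorbed into a fixed polyomino boundary. Your notch-and-filler mechanism is a legitimate way to make this precise, and your identification of the ``combinatorial heart''---designing the fillers so they cannot migrate into the interior---is exactly right. In practice, every place the paper actually invokes the lemma the region already carries a prescribed boundary colouring (the ``white'' rectangle of Lemma~\ref{l:wang}, the explicit colours on~$\G_\x$ in Theorem~\ref{t:ctt}), so there~$f$ can simply carve each boundary edge with the tooth profile of its given colour and no fillers are required; your version is faithful to the lemma's general statement rather than only to its specific uses. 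One small wording caution: your ``parity convention on unit edges of~$\Z^2$'' must be read as a convention on the \emph{side} of the edge relative to the tile (bottom/right versus top/left, as in the paper), not on the edge's absolute position, since otherwise two tiles meeting along a common edge would carve identical rather than complementary profiles.
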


\begin{proof}
We may consider polyominoes as generalized Wang tiles with monochromatic boundary,
and break generalized Wang tiles into Wang squares by coloring each interior edge with unique colors not used elsewhere, as to force assembly.
The equivalence of tiling by rectangles with the other settings is proved in~\cite{PY-rect}.
It remains to show the reduction from Wang squares to polyominoes.

Given a set $\W$ of Wang squares.
There is an algorithm to construct a set $\T$ of polyominoes such that $\T$ tiles the plane if and only if $\W$ does.
Indeed, simply think of each tile in $\W$ as a huge square, and replace each edge by an appropriate rectilinear zig-zag to encode the edge color.
If done correctly, these boundary zig-zags will enforce the matching rule.

An explicit construction can be found in~\cite{Gol-reduce}.
We reproduce it here since we do need to investigate \emph{some} construction more closely.
The \emph{corner zig-zags} are made slightly more complicated than the original for clarity.

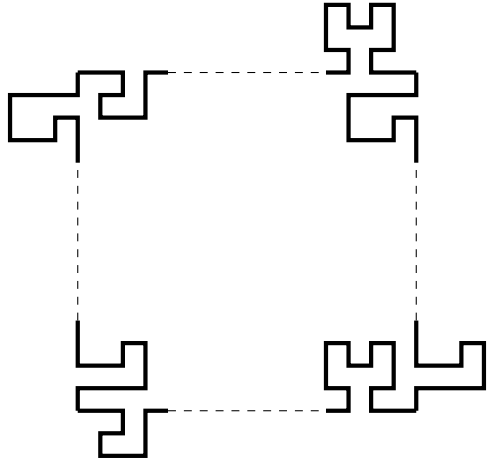
\begin{figure}[hbtp]
   \begin{tikzpicture}[scale=0.3]
   \newcommand{\mcsize}{15}
   \begin{scope}[ultra thick]
   \foreach \x in {0,\mcsize}
      \draw [xshift=\x cm,yshift=\mcsize cm] (0,0) -- (0,-1) -- (-3,-1) -- (-3,-3) -- (-1,-3) -- (-1,-2) -- (0,-2) -- (0,-4);
   \foreach \x in {0,\mcsize}
      \draw [xshift=\x cm] (0,0) -- (0,1) -- (3,1) -- (3,3) -- (2,3) -- (2,2) -- (0,2) -- (0,4);
   \foreach \y in {0,\mcsize}
      \draw [yshift=\y cm] (0,0) -- (2,0) -- (2,-1) -- (1,-1) -- (1,-2) -- (3,-2) -- (3,0) -- (4,0);
   \foreach \y in {0,\mcsize}
      \draw [xshift=\mcsize cm, yshift=\y cm] (0,0) -- (-2,0) -- (-2,1) -- (-1,1) -- (-1,3) -- (-2,3) -- (-2,2) -- (-3,2) -- (-3,3) -- (-4,3) -- (-4,1) -- (-3,1) -- (-3,0) -- (-4,0);
   \end{scope}

   \begin{scope}[dashed]
   \draw [yshift=-4cm] (0,8) -- (0,\mcsize);
   \draw [xshift=\mcsize cm,yshift=-4cm] (0,8) -- (0,\mcsize);
   \draw [xshift=-4cm] (8,0) -- (\mcsize,0);
   \draw [xshift=-4cm,yshift=\mcsize cm] (8,0) -- (\mcsize,0);
   \end{scope}
   \end{tikzpicture}
   \caption{Geometric encoding of a Wang square as a polyomino.}
   \label{fig:encode}
\end{figure}

Suppose there are $m$ edge colors used, and let $r>1+\log_2m$.
The dashed lines in \fig{encode} are of length~$r$.
Each edge color corresponds to a binary number with at most $r$ digits.
These digits are used to modify the dashed line segments.
A digit $0$ makes no modification,
while a digit $1$ adds a square outward in the corresponding position on the bottom and right,
and removes a square inward along the top and left.

The zig-zags on the corners force these tiles to align in a (dilated) square lattice grid.
Note that even if rotations and reflections are allowed,
the corner zig-zags moreover force all the tiles to have the same orientation.
Thus we \emph{could} in fact allow all isometries when tiling by polyominoes.
This fact is used in the proof of Corollary~\ref{cor:similar}.
When these polyominoes are adjacent, the series of one-square modifications on the touching boundaries enforce the color matching rules.
This construction clearly works for tiling finite or cofinite regions instead of the plane as well.
\end{proof}

\section{Turing machine emulation}\label{s:tm}
Consider a Turing machine $\M$ on an alphabet $\Sigma$ and $Q$ the set of states.
We refer to the elements of $\Sigma\sqcup\Sigma\times Q$ as \emph{colors},
where $(x,q)$, $x\in\Sigma$, $q\in Q$ are called \emph{compound colors}.
A configuration of $\M$ is represented as a (possibly infinite) sequence of colors,
where all but finitely many are the blank symbol $0\in\Sigma$.
Moreover, there must be precisely one compound color~$(x,q)$.
Reading $(x,q)$ as $x$, this sequence represents the contents of the tape.
The head of the machine is at the position of $(x,q)$, and the current state is~$q$.
Such a sequence of colors (corresponding to a valid configuration) is called \emph{valid}.
A row of (possibly infinite) Wang squares is \emph{valid} if their bottom colors form a valid sequence.
We say a set of Wang squares $\W$ \emph{emulates} $\M$ if given a valid row of Wang squares, and a second row of Wang squares below it,
the second row represents a configuration obtained from the previous one after one time step (and therefore is a valid row).
Moreover, if $\M$ is non-deterministic,
all possible next configurations are realizable by such tilings.
This way, a tiling with $\W$ is a space-time computation diagram of $\M$, given that it starts with a valid row.
Furthermore, all possible space-time diagrams are realizable as tilings.

\begin{lem}\label{l:tm}
There is a computable tileset $\MM$ of Wang squares that emulates a given Turing machine~$\M$.
\end{lem}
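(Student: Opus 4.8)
The plan is to give an explicit set $\MM$ of Wang squares, indexed by the local transitions of $\M$, so that any valid tiling forced to sit on top of a valid row reproduces, row by row, the space-time diagram of $\M$. First I would fix notation: write $\delta$ for the (possibly multivalued) transition relation, so a transition has the form $(q,x)\to(q',x',D)$ with $D\in\{L,R\}$, and recall that colors are elements of $\Sigma\sqcup(\Sigma\times Q)$. The key design principle is that the bottom edge of each Wang square carries the tape symbol at that cell \emph{before} the step and the top edge carries the tape symbol \emph{after} the step; the left/right edges carry an auxiliary color that tells a cell whether the head is entering it from that side, and in which state. Concretely I would introduce on the horizontal edges the ``tape colors'' $\Sigma\cup(\Sigma\times Q)$ already defined, and on the vertical edges a small set of ``signal colors'': a neutral color $\ast$ meaning ``no head crossing here,'' and for each state $q\in Q$ a color $q^{\leftarrow}$ and $q^{\rightarrow}$ meaning ``the head is moving in this direction in state $q$.''

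The tiles then fall into a handful of families, and I would list them in this order: (i) \emph{idle tiles} $(\ast,x,x,\ast)$ for each $x\in\Sigma$ --- a cell with no head, left and right signals neutral, symbol unchanged; (ii) \emph{write-and-leave tiles}: for each transition $(q,x)\to(q',x',R)$, the tile with bottom $(x,q)$, top $x'$, left signal $\ast$, right signal $q'^{\rightarrow}$ (the head writes $x'$ and departs to the right in state $q'$), and the mirror tile with left signal $q'^{\leftarrow}$ for $D=L$; (iii) \emph{receive tiles}: for each $y\in\Sigma$ and each state $q'$, a tile with bottom $y$, top $(y,q')$, and the incoming signal ($q'^{\rightarrow}$ on the left, $\ast$ on the right, or the mirror), meaning the head arrives at this cell carrying state $q'$ and the cell's symbol $y$ is unchanged at this instant. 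One checks that the signal colors glue exactly so that a right-moving head produced by a type-(ii) tile is consumed by the type-(iii) tile immediately to its right, and nowhere else. Halting states $q$ simply have no outgoing transition, so a cell $(x,q)$ has no type-(ii) tile above it and the tiling cannot be extended downward --- this is precisely the feature the applications use. I would then state the verification as two short claims: (a) if row $R_i$ is valid then any way of placing Wang squares of $\MM$ directly below it yields a valid row $R_{i+1}$ that is a legal successor configuration of $R_i$, and conversely (b) every legal one-step successor of $R_i$ arises from some such placement; both are local checks, transition by transition, on the at most three columns near the head (away from the head only idle tiles fit, because the neutral signal $\ast$ forces the symbol to be copied).

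The bookkeeping subtlety --- and the one place I expect to have to be careful --- is the boundary between the head's cell and its neighbours, and making sure no ``spurious'' head can be created out of nowhere. This is handled by the invariant that a non-neutral vertical color is emitted only by a type-(ii) tile (which simultaneously removes the head from its own cell) and absorbed only by a type-(iii) tile (which creates the head in exactly one neighbouring cell), so the number of compound colors in a row is preserved at exactly one; I would phrase this as a conservation lemma and prove it by inspecting the finite edge-color table. A second minor point is that $\MM$ must be \emph{computable} from $\M$: since $\Sigma$, $Q$, and $\delta$ are finite and given explicitly, the tile families above are enumerated by a trivial finite loop over $\Sigma$, $Q$, and $\delta$, so computability is immediate. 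Finally, to match the phrasing of the lemma about a ``second row of Wang squares below'' an arbitrary valid row (not necessarily one coming from a previous step), I would note that the argument is entirely local and never used the history of $R_i$, only its validity; assembling the per-column checks gives exactly the emulation property stated, completing the proof.
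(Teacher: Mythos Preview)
Your construction is the standard one and is correct in substance; the paper itself does not give a proof but simply cites a textbook (\cite{LeP}) for exactly this construction, so you have supplied what the paper defers. The three tile families (idle, write-and-leave, receive) together with the neutral/signal vertical colors are precisely the usual encoding, and your conservation argument that non-neutral signals are emitted only by type-(ii) tiles and absorbed only by type-(iii) tiles is the right way to verify that exactly one head persists per row.

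One small inconsistency to fix: your orientation is flipped relative to the paper's convention, and in fact relative to itself. The paper defines a valid row by its \emph{bottom} colors and places the next row \emph{below}, so within a tile the \emph{top} edge should carry the symbol before the step and the \emph{bottom} edge the symbol after. You state the opposite (``bottom \ldots\ before \ldots\ top \ldots\ after''), yet later write that a halting cell ``cannot be extended downward,'' which only makes sense in the paper's orientation. Swapping top and bottom throughout your tile descriptions resolves this and makes your tiles plug directly into the applications in Sections~\ref{s:cofinite} and~\ref{s:aug}, where the initial configuration sits on the bottom edge of $\G_\x$ and computation proceeds downward.
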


This is fairly standard; see \latin{e.g.}~\cite{LeP} for an explicit construction.

\end{document}